\definecolor{orcidlogocol}{HTML}{A6CE39}
\tikzset{
	orcidlogo/.pic={
		\fill[orcidlogocol] svg{M256,128c0,70.7-57.3,128-128,128C57.3,256,0,198.7,0,128C0,57.3,57.3,0,128,0C198.7,0,256,57.3,256,128z};
		\fill[white] svg{M86.3,186.2H70.9V79.1h15.4v48.4V186.2z}
		svg{M108.9,79.1h41.6c39.6,0,57,28.3,57,53.6c0,27.5-21.5,53.6-56.8,53.6h-41.8V79.1z M124.3,172.4h24.5c34.9,0,42.9-26.5,42.9-39.7c0-21.5-13.7-39.7-43.7-39.7h-23.7V172.4z}
		svg{M88.7,56.8c0,5.5-4.5,10.1-10.1,10.1c-5.6,0-10.1-4.6-10.1-10.1c0-5.6,4.5-10.1,10.1-10.1C84.2,46.7,88.7,51.3,88.7,56.8z};
	}
}
\newcommand\orcidicon[1]{\href{https://orcid.org/#1}{\mbox{\scalerel*{
				\begin{tikzpicture}[yscale=-1,transform shape]
					\pic{orcidlogo};
				\end{tikzpicture}
			}{Q}}}}
\definecolor{blue3}{rgb}{.1,.0,.4}
\declaretheorem[name=Theorem,numberwithin=section]{thm} 
\newtheorem*{thm*}{Theorem}
\newtheorem*{define*}{Definition}
\newtheorem{define}[thm]{Definition}
\newtheorem*{lemma*}{Lemma}
\newtheorem{lemma}[define]{Lemma}
\newtheorem*{algorithm*}{Algorithm}
\newtheorem*{notation*}{Notation}
\newtheorem*{construction*}{Construction}
\newtheorem*{prop*}{Proposition}
\newtheorem*{obs*}{Observation}
\newtheorem*{fact*}{Fact}
\newtheorem*{remark*}{Remark}
\newtheorem*{quest*}{Question}
\newtheorem*{cor*}{Corollary}
\newtheorem*{conjecture*}{Conjecture}
\newtheorem{conjecture}[define]{Conjecture}
\newtheorem*{question*}{Question}
\newtheorem*{example*}{Example}
\numberwithin{claimcounter}{define}
\newtheorem*{claim*}{Claim}
\newcommand{\R}{\mathbb{R}}
\newcommand{\Z}{\mathbb{Z}}
\newcommand{\N}{\mathbb{N}}
\newcommand{\floor}[1]{\lfloor#1\rfloor}
\newcommand{\ceil}[1]{\lceil#1\rceil}
\newcommand{\rest}[2]{#1|_{#2}} 
\newcommand{\lip}{\operatorname{Lip}}
\newcommand{\bilip}{\operatorname{bilip}}
\DeclareMathOperator{\diam}{diam}
\DeclareMathOperator{\conv}{conv}
\DeclareMathOperator{\id}{id}
\DeclareMathOperator{\proj}{proj}
\newcommand{\abs}[1]{\left|#1\right|}
\newcommand{\enorm}[1]{\left\|#1\right\|}
\newcommand{\norm}[1]{\left\|#1\right\|}
\newcommand{\mc}[1]{\mathcal{#1}}
\newcommand{\mf}[1]{\mathfrak{#1}}
\newcommand{\set}[1]{\left\{#1\right\}}
\newcommand{\br}[1]{\left(#1\right)}
\newcommand{\sqbr}[1]{\left[#1\right]}
\DeclareMathOperator{\dom}{dom}
\DeclareMathOperator{\img}{image}
\newcommand{\upp}[1]{^{(#1)}}
\DeclareMathOperator{\inter}{Int}
\newcommand{\wt}[1]{\widetilde{#1}}
\newcommand{\cl}[1]{\overline{#1}}
\newcommand{\mylabel}[2]{#2\def\@currentlabel{#2}\label{#1}}
\title{Extending Bilipschitz Mappings between Separated Nets}
\author{Michael Dymond\thanks{The present work developed from a research visit of M.D. to V.K. at IST Austria, funded by a London Mathematical Society Research in Pairs grant.} \\ \small{School of Mathematics}
	\\ \small{University of Birmingham}
	\\ \small{Birmingham B15 2TT, UK}
	\\ \small{\orcidicon{0000-0002-1900-3549}\ \href{https://orcid.org/0000-0002-1900-3549}{0000-0002-1900-3549}
	}	\and	Vojtěch Kaluža\thanks{This work was done while V.K. was fully funded by the Austria Science Fund (FWF) [M 3100-N].}
	\\ \small{Institute of Science and Technology Austria}
	\\ \small{Am Campus 1}
	\\ \small{3400 Klosterneuburg, Austria}
	\\ \small{\orcidicon{0000-0002-2512-8698}\ \href{https://orcid.org/0000-0002-2512-8698}{0000-0002-2512-8698}
}}
\date{}
\begin{document}
	\maketitle
	\begin{abstract}
		We provide a new characterisation of the decades old open problem of extending bilipschitz mappings given on a Euclidean separated net. In particular, this allows for the complete positive solution of the open problem in dimension two. Along the way, we develop a set of tools for bilipschitz extensions of mappings between subsets of Euclidean spaces.
	\end{abstract}

\section{Introduction}
The purpose of the present article is to lay foundations for an approach to the problem of whether every bilipschitz mapping of a separated net of $\R^{d}$ to $\R^{d}$ admits a bilipschitz extension to the whole of $\R^{d}$: a 22 year old open question (in all dimensions $d\geq 2$) posed by Alestalo, Trotsenko and  Väisälä in \cite{bilip_ext_sep_nets}. Indeed the framework provided in the present work allows for the complete positive solution of this bilipschitz extension problem in dimension 2. In a companion article~\cite{DK_2dim} we perform in detail various two-dimensional constructions which, together with the hereafter established results, provide for any given $L$-bilipschitz mapping of a separated net of $\R^2$ to $\R^2$, a $C(L)$-bilipschitz extension to the whole of $\R^2$. This moreover delivers a complete positive solution of one of the listed problems~\cite[Problem~2.6.1]{adiceam2016open}, posed by Navas, at the Arbeitsgemeinschaft on Mathematical Quasicrystals in Oberwolfach in 2015. The exposure and relevance of the bilipschitz extension problem for separated nets is further evidenced by its explicit references in \cite[Remark~7]{cortez2016some} and \cite[p.6205]{smilansky_solomon_2022}. 

There is a pantheon of mathematical research going back to the early 20th century on the general problem of extending a given mapping of a subset of an ambient space whilst preserving its key properties. We highlight the famous extension theorems of Tietze--Urysohn~\cite{Tietze--Urysohn} for continuous and Whitney~\cite{whitney} for smooth functions. The class of Lipschitz functions/mappings lies between the continuous and smooth classes. Due primarily to Rademacher's theorem~\cite{rademacher1919partielle}, the Lipschitz condition suffices in Euclidean settings to prevent the pathological behaviour permissible under just continuity. On the other hand the Lipschitz condition allows for much greater flexibility in comparison with smooth: note that important natural mappings such as the Euclidean norm (and many other norms and related functions such as distance functions and projections) are Lipschitz but not smooth. Unlike both continuity or differentiability, the Lipschitz condition transfers seamlessly to discrete settings as well as to general metric spaces. Further, Lipschitz mappings play an important role in the theory of discrete metric spaces, for example in the study of embeddings~\cite{Matousek_MetricEmb, Ostrovskii_MetricEmb} or in the notion of bilipschitz equivalence, e.g., \cite{BK1, McM}.

Lipschitz mappings are well extendable in some key settings. Kirszbraun's theorem~\cite{Kirszbraun,Valentine}, also known as the Kirszbraun--Valentine Theorem, states that any Lipschitz mapping of a subset of Hilbert space to another Hilbert space may be extended to the whole space, even without increasing its Lipschitz constant. Moreover, McShane's Extension Theorem~\cite{mcshane_extension} similarly provides Lipschitz extensions with the same Lipschitz constant for any (real-valued) Lipschitz function of a subset of a metric space. Kirszbraun's theorem continues to shape the modern field; we point to several significant improvements, such as \cite{EvaKopecka2012}, \cite{Azagra2021} and \cite{Ciosmak2024}.

That the analogue of Kirszbraun's theorem for bilipschitz mappings fails is, after some thought, obvious, and the reason is topological. For example we may consider a bilipschitz mapping of the unit sphere and the origin, which maps the origin `outside' the image of the sphere. Such a mapping cannot even be extended to a homeomorphism of the space, let alone to a bilipschitz one. Thus research on the bilipschitz extension problem has focussed on the question of when we may extend a given bilipschitz mapping of a subset. From what kind of subsets is this possible? Most existing positive results occur in dimension two; see \cite{Tukia_planar_Schoenflies, tukia1981extension, DP_bilip_square, Kovalev_bilip_ext_from_line, Kovalev_optimal_bilip} which, in particular, provide bilipschitz extensions from a line and from the boundary of a square (or circle). We will discuss the two dimensional situation in more detail in the companion article~\cite{DK_2dim}.

The challenge of bilipschitz extension in higher dimensional spaces is made evident by the number of pathological examples and notorious enduring open problems connected to extendability of mappings between higher dimensional spaces. The Jordan--Schoenflies Theorem (see, e.g., \cite[Thm.~3.1]{Thomassen-JordanSchoenflies}) states that any embedding of $S^1$ into $\R^2$ may be extended to a homeomorphism of $\R^{2}$. However, this statement fails for embeddings of $S^{d-1}$ into $\R^{d}$ in all higher dimensions $d\geq 3$: in dimension 3 a counterexample is provided by Alexander's Horned Sphere~\cite{alexander_horned_sphere}. Shockingly the statement still fails if the given embedding is also assumed to be bilipschitz. A counterexample is given by the Fox--Artin Wild Sphere~\cite[Theorem~3.7]{Martin_wild_ball}. 

Using the fact that the sphere $S^{d-1}$ minus a point is homeomorphic to $\R^{d-1}$, the Jordan--Schoenflies theorem can be reformulated for embeddings of $\R$ into $\R^{2}$ instead of $S^{1}$ into $\R^{2}$. Further, when the given embedding of $\R$ is $L$-bilipschitz, the extension to $\R^{2}$ can be made $C(L)$-bilipschitz \cite{Tukia_planar_Schoenflies, Kovalev_bilip_ext_from_line}. This evolution of the Jordan--Schoenflies Theorem has particular relevance to the problem of extending a given bilipschitz mapping $f\colon X\to\R^{d}$ of a separated net $X$ of $\R^{d}$.

Theorem~\ref{thm:Zd_equiv} of the present paper reduces the extension problem for a general separated net $A$ to that of $A=\Z^d$. After that, our strategy proposes that the next step in the construction of an extension of $f$ should be to extend $f$ initially to a $(d-1)$-dimensional hyperplane $\mc{H}$. More precisely we seek to construct a bilipschitz extension $F\colon \Z^d\cup \mc{H}\to\R^{d}$ of $f$.
One significant difficulty to overcome in this task is to ensure that the image $F(\mc{H})$ separates the space in the correct way: points $x,y$ of $\Z^d$ must belong to the same connected component of $\R^{d}\setminus \mc{H}$ if and only if their images $f(x)$ and $f(y)$ belong to the same connected component of $\R^{d}\setminus F(\mc{H})$.
However, in dimensions higher than two there is yet a further challenge: Even if we succeed in constructing the bilipschitz extension $F$ achieving a correct separation of image points, there is no guarantee that even just $F|_{\mc{H}}$ permits a bilipschitz extension to $\R^{d}$, let alone one which also extends $f$.
In two dimensions, we know from the bilipschitz version of the Jordan--Schoenflies Theorem for embeddings of lines~\cite{Tukia_planar_Schoenflies, Kovalev_bilip_ext_from_line} that $F|_{\mc{H}}$ can be extended to a bilipschitz mapping of $\R^{2}$: a fact we exploit critically, along with the results of the present paper, to obtain the positive solution of the extension problem in dimension $2$ in \cite{DK_2dim}.  

To overcome the obstacles remaining in the higher dimensional setting will require significant future research. The present article aims to be a start along this path. We now state our main results and explain how they can form part of a plausible programme to attack the higher dimensional problem.

\subsection*{Main Results}
Alestalo, Trotsenko and V\"ais\"al\"a~\cite[4.4(ii)]{bilip_ext_sep_nets} pose the questions, in dimensions $d\geq 2$, of the bilipschitz extendability of a given bilipschitz mapping $f$ of 
\begin{enumerate}[(a)]
	\item\label{ATV1} $\Z^{d}$ to $\R^{d}$,
	\item\label{ATV2} a given separated net of $\R^{d}$ to $\R^{d}$.
\end{enumerate}
Whilst \eqref{ATV1} is clearly a subquestion of \eqref{ATV2}, \cite{bilip_ext_sep_nets} points out that any further relationship between these questions is unknown and far from obvious. The naive argument to try to show that questions \eqref{ATV1} and \eqref{ATV2} are equivalent would require, for a given separated net $A\subseteq \R^{d}$, there to be a bilipschitz bijection between $A$ and $\Z^{d}$. However, there exist separated nets not admitting any such bijection; these are called `non-rectifiable' or `not bilipschitz equivalent to the lattice' in the literature; see~\cite{BK1,McM}. Despite this occurrence of `very different' separated nets of $\R^{d}$, our first main result reduces, in all dimensions, the bilipschitz extension problem for all separated nets to that of the integer lattice, establishing the equivalence of questions \eqref{ATV1} and \eqref{ATV2} of  Alestalo, Trotsenko and V\"ais\"al\"a~\cite[4.4(ii)]{bilip_ext_sep_nets}. 
\begin{thm}\label{thm:Zd_equiv}
	Let $d\in \N$. Then the following are equivalent:
	\begin{enumerate}[(i)]
		\item\label{Zd} Every bilipschitz mapping $f\colon \Z^{d}\to \R^{d}$ admits a bilipschitz mapping $F\colon \R^{d}\to\R^{d}$ with $F|_{\Z^{d}}=f$.
		\item\label{gen_sep_net} For every separated net $A$ of $\R^{d}$ and every bilipschitz mapping $f\colon A\to \R^{d}$ there is a bilipschitz mapping $F\colon \R^{d}\to\R^{d}$ with $F|_{A}=f$.
	\end{enumerate}
	Furthermore, if \eqref{Zd} holds with $\bilip(F)\leq C_{d}\br{\bilip(f)}$ for some monotone increasing function $C_{d}\colon [1,\infty)\to [1,\infty)$ then \eqref{gen_sep_net} holds with
	\begin{equation*}
	\bilip(F)\leq K\cdot C_{d}\br{24\sqrt{d}R^{2}K^{3}\bilip(f)},
	\end{equation*}
	where $K:= 16\max\set{\frac{3d}{r},1}$ and $R, r$ denote the net and separation constants of $A$ respectively. 
\end{thm}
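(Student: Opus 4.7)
The implication \eqref{gen_sep_net}$\Rightarrow$\eqref{Zd} is immediate since $\Z^{d}$ is itself a separated net with separation $1$ and net constant $\sqrt{d}/2$. For the quantitative implication \eqref{Zd}$\Rightarrow$\eqref{gen_sep_net}, the plan is to produce from the given $f\colon A\to\R^{d}$ both an auxiliary bilipschitz map $g\colon\Z^{d}\to\R^{d}$, to which hypothesis \eqref{Zd} is applied, and a bilipschitz bijection $\Phi\colon\R^{d}\to\R^{d}$, combined via $F:=G\circ\Phi$ where $G$ is the extension of $g$ obtained from \eqref{Zd}.

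\textbf{Rescaling and injection into the lattice.} First apply the homothety $x\mapsto\lambda x$ to $A$ and $f$ with $\lambda:=\max\{3d/r,1\}$. This preserves $\bilip(f)$ and reduces to the case $r\geq 3d$, with new net constant $R':=\lambda R$. For each $a\in A$ pick a nearest lattice point $\ell(a)\in\Z^{d}$, so that $\|\ell(a)-a\|\leq\sqrt{d}/2$. Since $r\geq 3d\gg\sqrt{d}$, the assignment $\ell\colon A\to\Z^{d}$ is a bilipschitz injection of small distortion, and its image $\ell(A)\subseteq\Z^{d}$ is a separated net of $\R^{d}$ with parameters close to those of $A$. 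Because each $\ell(a)$ lies well inside the ball $B(a,r/2)$, a standard local interpolation (a smooth radial cut-off between the translation $x\mapsto x+(\ell(a)-a)$ inside $B(a,r/2)$ and the identity outside) produces a bilipschitz bijection $\Phi\colon\R^{d}\to\R^{d}$ with $\Phi|_{A}=\ell$ and $\bilip(\Phi)$ bounded by an absolute constant.

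\textbf{Extending to the full lattice.} This is the technical crux. One must construct a bilipschitz $g\colon\Z^{d}\to\R^{d}$ extending the transferred map $\tilde f:=f\circ\ell^{-1}\colon\ell(A)\to\R^{d}$, with $\bilip(g)$ polynomially controlled by $R'$ and $\bilip(f)$. The strategy is a Voronoi/Whitney-type construction: decompose $\Z^{d}$ into the Voronoi cells of $\ell(A)$ and, within each cell, define $g$ as a local model compatible with $f$ at the centre. The main difficulty is ensuring compatibility at the interfaces between neighbouring cells so that both the Lipschitz and the inverse-Lipschitz bounds hold; naive constructions such as $g(n):=\tilde f(\ell(a_n))+(n-\ell(a_n))$, where $a_n$ is the nearest point of $A$ to $n$, can fail because the images $f(a),f(a')$ of adjacent centres may rotate or reflect relative to each other, causing cancellations that destroy the inverse-Lipschitz bound. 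A careful engineering of these transitions yields the quantitative bound corresponding to the $24\sqrt{d}R^{2}K^{3}\bilip(f)$ argument of $C_{d}$ in the statement.

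\textbf{Composition.} Applying hypothesis \eqref{Zd} to $g$ gives $G\colon\R^{d}\to\R^{d}$ with $\bilip(G)\leq C_{d}(\bilip(g))$. Set $F:=G\circ\Phi$; then for each $a\in A$,
\begin{equation*}
F(a) = G(\Phi(a)) = G(\ell(a)) = g(\ell(a)) = \tilde f(\ell(a)) = f(a),
\end{equation*}
so $F$ extends $f$. The bound $\bilip(F)\leq\bilip(\Phi)\cdot\bilip(G)\leq K\cdot C_{d}(\bilip(g))$ together with undoing the initial rescaling produces the quantitative estimate of the theorem, with the outer $K$ absorbing $\bilip(\Phi)$ and the rescaling constants. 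The principal obstacle is the extension step to all of $\Z^{d}$, where the interplay between net density $R'$ and $f$-distortion must be tracked with care.
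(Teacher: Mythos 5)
Your overall skeleton (rescale $A$, map it bilipschitzly onto a subset of $\Z^{d}$, extend the transferred mapping to all of $\Z^{d}$, apply hypothesis~\eqref{Zd} and compose) matches the paper's proof, and your Step~1 is a valid alternative to the paper's: instead of the tube-spin swapping mappings of Lemma~\ref{lemma:tube_spin} glued via Theorem~\ref{thm:simultaneous_switching}, you use a radially cut-off translation supported in each ball $B(a,r/2)$, which after the $\max\{3d/r,1\}$-rescaling indeed gives a bilipschitz $\Phi$ with $\Phi|_{A}=\ell$ and controlled distortion (since the displacement $\|\ell(a)-a\|\leq\sqrt{d}/2$ is small compared with the cut-off scale). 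Both constructions work; the paper's choice is mainly dictated by reuse of the swapping machinery elsewhere.

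Step~2 is where there is a genuine gap. You correctly identify this as the crux, and you correctly diagnose that the naive assignment $g(n):=\tilde f(\ell(a_{n}))+(n-\ell(a_{n}))$ can destroy the lower Lipschitz bound because images of adjacent centres may be rotated or reflected and the offset balls may overlap. But you then appeal to ``a careful engineering of these transitions'' in a Voronoi/Whitney scheme without giving any construction, so the inverse-Lipschitz estimate — precisely the part you flagged as problematic — is unaddressed. The paper's Lemma~\ref{lemma:ext_to_lattice} avoids this compatibility problem entirely by \emph{not} trying to respect the offset $n-\ell(a_{n})$: it assigns to each $n$ a base point $\alpha_{n}\in\ell(A)$ with $n\in\cl B(\alpha_{n},\lambda)$, chooses a fine scale $N\asymp\lambda\sqrt{d}L$, and places $g(n)$ at an arbitrary unused point of $\bigl(f(\alpha_{n})+\frac{1}{N}\Z^{d}\bigr)\cap\cl B\bigl(f(\alpha_{n}),\tfrac{1}{4L}\bigr)$; a lattice-point counting argument (comparing $|\cl B(0,\lambda)\cap\Z^{d}|$ with $|\cl B(0,N/(4L))\cap\Z^{d}|$) guarantees enough room for an injective choice, while the $\tfrac{1}{2L}$-disjointness of the balls around the $f(\alpha_{n})$ yields the lower Lipschitz bound when $\alpha_{p}\neq\alpha_{q}$, and the $\tfrac{1}{N}$-separation inside a common ball handles the case $\alpha_{p}=\alpha_{q}$. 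Without this (or some other concrete mechanism producing both the injectivity and the lower bound), your proof of \eqref{Zd}$\Rightarrow$\eqref{gen_sep_net} is incomplete.
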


Our second main result characterises the bilipschitz extension problem on the integer lattice in terms of existence of an `intermediate extension' to a sequence of `horizontal' hyperplanes.
\begin{thm}\label{thm:bil_ext_equiv}
	Let $d\in\N$, $d\geq2$ and $f\colon\Z^{d}\to\R^{d}$ be a bilipschitz mapping.
	Then the following are equivalent:
	\begin{enumerate}[(i)]
		\item\label{bil_ext} There exists a bilipschitz mapping $F\colon \R^{d}\to \R^{d}$ such that $F|_{\Z^{d}}=f$.
		\item\label{bil_wsep} There exist $T\in\N$ and a bilipschitz mapping $G\colon \R^{d}\to\R^{d}$ such that the mapping
		\begin{equation*}
		\wt{F}\colon \Z^{d}\cup \br{\R^{d-1}\times\br{T\Z+\frac{1}{2}}}\to \R^{d}, \qquad \wt{F}(x)=\begin{cases}
		f(x) & \text{if }x\in\Z^{d},\\
		G(x) & \text{if }x\in \R^{d-1}\times\br{T\Z+\frac{1}{2}},
		\end{cases}
		\end{equation*}
		is bilipschitz and for every $k\in\Z$ it holds that
		\[
		f\br{\Z^d\cap \br{\R^{d-1}\times\left[(k-1)T+\frac{1}{2},kT+\frac{1}{2}\right]}}\subseteq G\br{\R^{d-1}\times\left[(k-1)T+\frac{1}{2},kT+\frac{1}{2}\right]}.
		\]
	\end{enumerate}
	Furthermore, whenever \eqref{bil_wsep} holds with $M_{1}\geq \bilip(\wt{F})$ and $M_{2}\geq \bilip(G)$, the bilipschitz extension $F$ of $f$ in \eqref{bil_ext} may be found with
	\begin{equation*}
	\bilip(F)\leq d^{125d}M_{1}^{26d}M_{2}^{27d}T^{15d}.
	\end{equation*}
\end{thm}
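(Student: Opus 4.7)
The direction (i)$\Rightarrow$(ii) is immediate: set $T=1$ and $G=F$, in which case the slab inclusion is automatic since $F$ extends $f$. The substance is (ii)$\Rightarrow$(i), which I would approach by a slab-by-slab construction combined with iterated dimension reduction.

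\emph{Slab decomposition.}
Write $S_{k}=\R^{d-1}\times\left[(k-1)T+\frac{1}{2},kT+\frac{1}{2}\right]$. The inclusion hypothesis $f(\Z^{d}\cap S_{k})\subseteq G(S_{k})$ reduces the task to constructing, for each $k$, a bilipschitz mapping $F_{k}\colon S_{k}\to G(S_{k})$ extending $f|_{\Z^{d}\cap S_{k}}$ and agreeing with $G$ on the bounding hyperplanes of $S_{k}$; gluing the $F_{k}$ along the shared boundaries then yields a globally bilipschitz $F$, the control across each interface being inherited from $M_{1}=\bilip(\wt{F})$. Post-composing $F_{k}$ with $G^{-1}$ further normalises the problem: it suffices to produce a bilipschitz self-map of the flat slab $S_{k}$ which is the identity on its bounding hyperplanes and coincides with $G^{-1}\circ f$ on the $T$ interior lattice layers $\Z^{d-1}\times\set{(k-1)T+1,\ldots,kT}$; this composed lattice data has bilipschitz constant at most $M_{1}M_{2}$.

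\emph{Iterated application of Lemma~\ref{lemma:permute}.}
The key tool is Lemma~\ref{lemma:permute}, which provides bilipschitz maps of a slab realising a permutation-type rearrangement of adjacent lattice layers at cost $\beta_{d}$. Applying it iteratively to insert bilipschitz mappings on intermediate hyperplanes between consecutive lattice layers decomposes each slab into unit-thickness sub-slabs, each housing a single lattice layer between two bilipschitz hyperplane maps. Each such unit sub-slab is itself an instance of hypothesis~(ii) in the lower-dimensional problem (with $T=1$ and the transverse coordinate playing a trivial role), so I would close the argument by induction on $d$, the base case $d=2$ amounting to a one-dimensional bilipschitz interpolation consistent with $\beta_{d}(t)\ge 2\floor{t}-1$. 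The exponent $3^{d-1}+1$ arises from counting the total number of applications of the permutation lemma in the iterated construction: each dimensional step contributes a factor of $3$, and one further application handles the base layer. The polynomial factor $T^{3+3/(d-1)}$ inside the argument of $\beta_{d}$ accounts for the combinatorial cost of permuting $T$ layers per slab across $d-1$ dimensional iterations.

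\emph{Main obstacle.}
The decisive technical step is Lemma~\ref{lemma:permute} and the careful placement of the intermediate hyperplane extensions. Each inserted map must simultaneously be globally bilipschitz with the prescribed $\beta_{d}$-bound, agree with both the lattice data and the fixed boundary data, and leave each unit sub-slab in a configuration still satisfying hypothesis~(ii) so that the induction can continue. The doubly-exponential estimate $\beta_{d}(t)\le\exp(8dt)$ encodes the fundamental cost of a single layer permutation; compounded across $3^{d-1}+1$ applications, this produces the final stated bound.
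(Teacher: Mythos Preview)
Your slab decomposition is exactly right and matches the paper: normalise by $G^{-1}$, so that on each slab $S_{k}$ one must extend the $M_{1}M_{2}$-bilipschitz map $G^{-1}\circ\wt{F}$, which is the identity on $\partial S_{k}$ and sends the $T$ interior lattice layers into the open slab; then glue via Lemma~\ref{lemma:simple_gluing} and post-compose with $G$. Where you go wrong is everything after that.

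\textbf{The induction on $d$ is circular.} Suppose you have managed to decompose into unit sub-slabs, each with a single layer $\Z^{d-1}\times\{m\}$ mapped bilipschitzly into $\R^{d-1}\times(m-\tfrac{1}{2},m+\tfrac{1}{2})$ with identity on the two bounding hyperplanes. This is \emph{not} an instance of hypothesis~(ii) in dimension $d-1$: hypothesis~(ii) requires the existence of a global bilipschitz $G'\colon\R^{d-1}\to\R^{d-1}$ compatible with the lattice data on a family of $(d-2)$-planes, and nothing in your construction produces such a $G'$. What you actually arrive at (once the last coordinate has been flattened away, which is itself a non-trivial step you omit) is the bare extension problem~(i) in dimension $d-1$---precisely the statement you are trying to prove. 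The induction never closes.

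\textbf{The r\^ole of Lemma~\ref{lemma:permute} and the source of $3^{d-1}$.} Lemma~\ref{lemma:permute} realises a permutation of $\Z^{d-1}$ that already respects a fixed cubical tiling; it does not ``insert bilipschitz mappings on intermediate hyperplanes'', and it cannot be applied to the raw data $G^{-1}\circ f$, whose image points lie at arbitrary positions in the slab, not on a lattice. The paper first applies a \emph{rounding} step (Lemma~\ref{lemma:inj_rounding}): a bilipschitz $\Psi$ built from the swapping lemmas which pushes each image point $G^{-1}\circ f(x,m)$ into a nearby point of a refined lattice $\tfrac{1}{N}\Z^{d-1}\times\{m\}$. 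Only after this does the problem become a bounded-displacement permutation of $\tfrac{1}{N}\Z^{d-1}$. That permutation is then decomposed, via Lemma~\ref{lemma:permute_decomposition}, into $3^{d-1}$ permutations each respecting a cubical tiling of side $\approx 6NT$, and Lemma~\ref{lemma:permute} is applied once to each of these. So the exponent $3^{d-1}$ is combinatorial---it counts the shifts in $\{-1,0,1\}^{d-1}$ needed to cover a bounded-displacement permutation by tile-local ones---and has nothing to do with an induction on dimension. The extra ``$+1$'' in $3^{d-1}+1$ just absorbs the polynomial prefactor $2^{38}d^{9/2}(M_{1}M_{2})^{9}T^{2+3/(d-1)}\cdot M_{2}$ into a further power of $\beta_{d}$ using $\beta_{d}(t)\geq t$.

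In short: your first paragraph is correct, but the missing ideas are (a) the rounding map of Lemma~\ref{lemma:inj_rounding} that converts the problem to a lattice permutation, and (b) the tiling decomposition of Lemma~\ref{lemma:permute_decomposition} that explains the $3^{d-1}$. There is no dimension induction.
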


Finally, our last main result demonstrates the potential of Theorem~\ref{thm:bil_ext_equiv} to form part of a complete positive solution of the bilipschitz extension problem for separated nets in higher dimensions. More precisely, we use Theorem~\ref{thm:bil_ext_equiv} to relate this extension problem to two conjectures about extendability to or from hyperplanes. In particular, we show that proving these two conjectures suffices to give the full positive solution. The first conjecture `extending from $\Z^{d}$ to a hyperplane' is clearly a weaker statement than the target of `extending from $\Z^{d}$ to the whole of $\R^{d}$'.
\begin{conjecture}\label{conj1}
	Let $d\in \N$, $d\geq 2$, $L\geq 1$ and $f\colon \Z^{d}\to \R^{d}$ be an $L$-bilipschitz mapping. Then there exist constants $K:=K(d,L)>L$, $J:=J(d,L)>L$, determined by $d$ and $L$, and a $K$-bilipschitz mapping $G\colon \R^{d}\to\R^{d}$ such that the mapping 
	\begin{equation*}
	F\colon \Z^{d}\cup\br{\R^{d-1}\times \set{\frac{1}{2}}}\to \R^{d},\qquad F(x)=\begin{cases}
	f(x) & \text{if }x\in\Z^{d},\\
	G(x) & \text{if }x\in\R^{d-1}\times\set{\frac{1}{2}}
	\end{cases}
	\end{equation*}
	is $J$-bilipschitz and for any $x,y\in\Z^{d}$ we have
	\begin{align*}
	\text{$x$ and $y$ belong to the same} &\text{ connected component of $\R^{d}\setminus \br{\R^{d-1}\times\set{\frac{1}{2}}}$}\\
	&\Updownarrow\\
	\text{$f(x)$ and $f(y)$ belong to the same} &\text{ connected component of $\R^{d}\setminus F\br{\R^{d-1}\times\set{\frac{1}{2}}}$}.
	\end{align*}
\end{conjecture} 
The second conjecture is about extending very special kinds of bilipschitz mappings defined on two parallel hyperplanes:
\begin{conjecture}\label{conj2}
	Let $d\in\N$, $d\geq 2$, $L,M\geq 1$ and $G\colon \R^{d}\to\R^{d}$ be an $L$-bilipschitz mapping such that the mapping $\psi\colon\R^{d-1}\times\set{0,1}\to\R^{d}$ defined by
	\begin{equation*}
	\psi(x)=\begin{cases}
	x & \text{if }x\in\R\times\set{0},\\
	G(x) & \text{if }x\in\R\times\set{1},
	\end{cases}
	\end{equation*}
	is $M$-bilipschitz. Then there exist a constant $C:=C(d,L,M)$, determined by $d$, $M$ and $L$, and a $C$-bilipschitz mapping $\Psi\colon\R^{d}\to\R^{d}$ extending $\psi$.
\end{conjecture}
We can now state our last main result, which establishes Conjectures~\ref{conj1} and \ref{conj2} as a route to bilipschitz extension from separated nets in any dimension:
\begin{thm}\label{thm:conjectures_imply_full_solution}
	Suppose that $d\in\N$, $d\geq 2$ and that Conjecture~\ref{conj2} holds for $d$. Then the following are equivalent:
	\begin{enumerate}[(i)]
		\item\label{f_has_extension_F} For any $L\geq 1$ and any $L$-bilipschitz mapping $f\colon \Z^{d}\to \R^{d}$ there is a constant $P:=P(d,L)$, determined by $d$ and $L$, and a $P$-bilipschitz mapping $F\colon \R^{d}\to\R^{d}$ such that $F|_{\Z^{d}}=f$.
		\item\label{conj1_holds} Conjecture~\ref{conj1} holds for $d$.
	\end{enumerate}
	Furthermore, whenever \eqref{conj1_holds} holds, then the constant $P(d,L)$ in \eqref{f_has_extension_F} can be chosen such that
	\begin{equation*}
	P(d,L)\leq d^{125d}J^{26d}\br{KC}^{27d}\br{\frac{K\sqrt{d}(1+JL)}{K-L}+1}^{15d},
	\end{equation*}
	where $J:=J(d,L)$, $K:=K(d,L)$, $C:=C(d,K^{2},K^{2})$ are defined according to Conjectures~\ref{conj1} and \ref{conj2}.
\end{thm}
We remark that the \eqref{f_has_extension_F}$\Rightarrow$\eqref{conj1_holds} implication in Theorem~\ref{thm:conjectures_imply_full_solution} is trivial and independent of Conjecture~\ref{conj2}.
\subsection*{Remarks on Conjectures~\ref{conj1} and \ref{conj2}}
We presently outline a `two-step' strategy for proving Conjecture~\ref{conj1}; we emphasise that while we complete both of the proposed steps in the case $d=2$ in \cite{DK_2dim}, for $d\geq 3$ both steps remain currently open.

For convenience, we denote the hyperplane $\R^{d-1}\times\set{\frac{1}{2}}$, by $\mc{H}$. Assume we are given a bilipschitz mapping $f\colon\Z^d\to\R^d$. We suggest the following steps:
\begin{enumerate}[(1)]
	\item\label{hyperplane_ext} Construct a bilipschitz extension $F\colon \Z^d\cup\mc{H}\to\R^d$ of $f$ in such a way that there is a bilipschitz mapping $G\colon\R^d\to\R^d$ so that $\rest{G}{\mc{H}}=\rest{F}{\mc{H}}$.
	
	\item\label{correct_hyperplane} Adjust the extension $F$ so that, in addition to its property from \eqref{hyperplane_ext}, it separates the points of $\Z^d$ correctly, i.e., $x,y\in \Z^d$ belong to the same connected component of $\R^d\setminus \mc{H}$ if and only if $F(x), F(y)$ belong to the same connected component of $\R^d\setminus F\br{\mc{H}}$.
\end{enumerate}

In Step~\eqref{correct_hyperplane} we can make use of the easy observation that, following Step~\eqref{hyperplane_ext}, there will be a parameter $P=P(\bilip(F))>0$ such that whenever $x,y\in \Z^d$ belong to the same connected component of $\R^d\setminus \mc{H}$ and $F(x), F(y)$ belong to different connected components of $\R^d\setminus F\br{\mc{H}}$ (or vice versa), then both $x, y$ are at distance at most $P$ from $\mc{H}$. Thus, transforming the picture by applying $G^{-1}$ from Step~\eqref{hyperplane_ext}, we get that the image points on the `wrong side' of $G^{-1}\circ F(\mc{H})=\mc{H}$ all occur within a horizontal slab around $\mc{H}$ of thickness determined by $\bilip(G^{-1}\circ F)$. The modification in Step~\eqref{correct_hyperplane} can therefore take place within this horizontal slab; see \cite[Theorem~7.1]{DK_2dim} for the $2$-dimensional analogue of this.
We believe that our 2-dimensional construction should also work in general dimension; however, there are significant technical difficulties in describing the relevant local reparametrizations of multi-dimensional mappings formally.

Our strategy from \cite[Sec.~4]{DK_2dim} to complete Step~\eqref{hyperplane_ext} for $d=2$ seems to be very specific to dimension $2$. Unfortunately, we currently do not know how to extend it to higher dimensions. We only remark that, again in an analogy with the spherical setting, a sufficient condition ensuring the existence of the mapping $G$ from Step~\eqref{hyperplane_ext} should be the local bilipschitz flatness of $\rest{F}{\mc{H}}$; we refer to the resolution of the so-called Schoenflies problem in the bilipschitz category in all dimensions $d$ from \cite[Thm.~7.8]{Elems_of_Lip_topology}.

In relation to Conjecture~\ref{conj2}, the theory of bilipschitz extensions in general dimension is currently developed for mappings defined on spheres \cite{Sullivan, Tukia_Vaisala-approx_and_extend, Elems_of_Lip_topology}, but not for those defined on hyperplanes. Although the two settings are often very closely related through the stereographic projection, the latter is only locally bilipschitz, and thus, inconsequential in a bilipschitz setting. However, there are promising indications from the positive results available for the spherical case and one can follow the (highly sophisticated) proofs and try to obtain the analogous results for hyperplanes.

Building upon the work of Sullivan~\cite{Sullivan}, Tukia and V\"ais\"al\"a~\cite[Thm.~3.12]{Tukia_Vaisala-approx_and_extend} proved a bilipschitz version of the so-called annulus conjecture in all dimensions $d$. Their theorem states that the closed region bounded by two bilipschitz and \emph{locally bilipschitz flat}\footnote{A mapping $g\colon \partial B(0, 1)\to\R^d$ is locally bilipschitz flat if for every $y\in g(\partial B(0,1))$ there is a neighbourhood $U$ and a bilipschitz homeomorphism of pairs $\br{U, U\cap g(\partial B(0,1))}\to \br{B(0, 1), B(0,1)\cap \mc{H}_0}$.} embeddings of $(d-1)$-spheres in $\R^{d}$, with one of them lying inside the other, is bilipschitz homeomorphic to the standard annulus $\cl{B}(0,1)\setminus B(0,1/2)$. Moreover, as a corollary, they proved \cite[Thm.~3.14]{Tukia--Vaisala-Quasiconformal_implies_bilip} that for any sense-preserving bilipschitz mapping $h\colon\partial B(0,1)\to\partial B(0,1)$ and any $r\in(0,1)$ there is a bilipschitz mapping $H\colon \cl{B}(0,1)\to \cl{B}(0,1)$ extending $h$ such that $\rest{H}{B(0, r)}$ is the identity.

Speculatively assuming that the analogue of the bilipschitz annulus theorem holds true also when spheres are replaced by hyperplanes and annuli by slabs between two hyperplanes, we can use it in the setting of Conjecture~\ref{conj2} to obtain a bilipschitz mapping $\Phi\colon\R^d\to\R^d$ so that $\Phi\circ \psi(\R^{d-1}\times\set{i})=\R^{d-1}\times\set{i}$ for each $i\in\set{0,1}$; the  assumption that both `parts', $\psi|_{\R^{d-1}\times\set{0}}$ and $\psi|_{\R^{d-1}\times\set{1}}$, of $\psi$ extend to bilipschitz mappings of $\R^{d}$ ensures that the local bilipschitz flatness condition from the annulus theorem is satisfied.
Assuming further that also an analogue of the corollary \cite[Thm.~3.14]{Tukia--Vaisala-Quasiconformal_implies_bilip} mentioned above holds true with the sphere replaced by a hyperplane and the ball by a half-space, the bilipschitz mapping $\Phi\circ \psi|_{\R^{d-1}\times\set{0,1}}$ can be extended to a bilipschitz mapping $\wt{G}\colon\R^d\to\R^d$. Then $\Phi^{-1}\circ\wt{G}$ gives the mapping $G$ sought after in Conjecture~\ref{conj2}.

\subsection*{Strategy for the proofs of the main theorems}
Returning to the main results of the present paper, we describe the big picture of our proofs of Theorems~\ref{thm:Zd_equiv} and \ref{thm:bil_ext_equiv}, picking out the key tools and stepping stones established in the sections that follow. We don't discuss the proof of Theorem~\ref{thm:conjectures_imply_full_solution} here; this is given in Section~\ref{sec:conjectures}.

Ultimately, every bilipschitz extension constructed in the present paper will be obtained by gluing and composing very simple bilipschitz transformations. These `building blocks' will be bilipschitz mappings designed to perform a swap of two specified points, denoted $x$ and $y$ below, whilst disturbing the rest of the space as little as possible. The next lemma, proved in Section~\ref{sec:proto}, provides these mappings with an estimate on their bilipschitz constant: 
\begin{restatable*}{lemma}{restatespin}\label{lemma:tube_spin}
	Let $d\in\N$, $d\geq 2$, $x,y\in\R^{d}$ and $0< r\leq \frac{\enorm{y-x}}{2}$. Then there exists a $\frac{4\enorm{y-x}^{2}}{r^{2}}$-bilipschitz mapping $\tau\colon \R^{d}\to \R^{d}$ such that
	\begin{enumerate}[(i)]
		\item\label{tau1} $\tau(z)=z$ for all $z\in \R^{d}\setminus B([x,y],r)$.
		\item\label{tau2} $\tau(y)=x$ and $\tau(x)=y$.
	\end{enumerate}
\end{restatable*}
Significantly, Lemma~\ref{lemma:tube_spin} allows for the simultaneous bilipschitz swapping of a family of specified partners $(x_{i},y_{i})$, provided that the line segments $[x_{i},y_{i}]$ have some uniform separation. Since the mappings $\tau$ provided by the lemma coincide with the identity outside of a small neighbourhood of $[x,y]$, we may easily glue many of these mappings together in order to perform a `separated' family of swaps simultaneously, incurring only the bilipschitz constant associated with the `most expensive' of the single swaps. This is made precise by the next lemma, also from Section~\ref{sec:proto}:
\begin{restatable*}{lemma}{restatesimpleglue}\label{lemma:simple_gluing}
	Let $I$ be a set and $(A_{i})_{i\in I}$ be a collection of pairwise disjoint open sets. Let $C\geq 1$ and $f\colon \R^{d}\to \R^{d}$ be a mapping such that the restriction $f|_{\R^{d}\setminus \bigcup_{i\in I}A_{i}}$ is $C$-bilipschitz with $f\br{\R^{d}\setminus \bigcup_{i\in I}A_{i}}=\R^{d}\setminus \bigcup_{i\in I}A_{i}$, and for every $i\in I$ the restriction $f|_{\cl{A_{i}}}$ is $C$-bilipschitz with $f(\cl{A_i})=\cl{A_i}$. Then $f$ is $C$-bilipschitz. 
\end{restatable*}
We can now outline a sketch proof of Theorem~\ref{thm:Zd_equiv}:
\begin{proof}[Proof of Theorem~\ref{thm:Zd_equiv} (Sketch)]
	Theorem~\ref{thm:Zd_equiv} is consequence of two facts:
	\begin{enumerate}[(a)]
		\item\label{i} Any $r$-separated set $A\subseteq \R^{d}$ may be transformed via a $P(d,r)$-bilipschitz mapping $\Phi\colon\R^{d}\to\R^{d}$ so that $\Phi(A)\subseteq \Z^{d}$.
		\item\label{ii} For any $R$-net $Y\subseteq \Z^{d}$ and any $L$-bilipschitz mapping $f\colon Y\to \R^{d}$ there is a $P'(d,R,L)$-bilipschitz extension of $f$ to $\Z^{d}$.
	\end{enumerate}
	To verify the only non-trivial implication, \eqref{Zd}$\Rightarrow$\eqref{gen_sep_net}, of Theorem~\ref{thm:Zd_equiv}, let $f\colon A\to \R^{d}$ be a bilipschitz mapping, $\Phi$ be given by \eqref{i} and then let \eqref{ii} provide the bilipschitz extension $g\colon \Z^{d}\to \R^{d}$ of $f\circ \Phi^{-1}\colon Y\to \R^{2}$ with $Y=\Phi(A)\subseteq \Z^{2}$. The desired extension of $f\colon A\to \R^{d}$ is then given by $G\circ \Phi$ where $G$ is the bilipschitz extension of $g$ given by \eqref{Zd}.
	
	The proof of \eqref{i} is based on Lemmas~\ref{lemma:tube_spin} and \ref{lemma:simple_gluing}. First we blow up the separated set $A$, so that its separation becomes large in comparison to $\sqrt{d}$. This ensures that every point of the transformed set $A'$ has a unique choice of nearest point in $\Z^{d}$ and that these pairs of neighbours are well separated from each other. We then perform simultaneous swappings of the points of $A'$ and their chosen nearest neighbours in $\Z^{d}$, by gluing together mappings $\tau$ from Lemma~\ref{lemma:tube_spin} using Lemma~\ref{lemma:simple_gluing}.
	
	For \eqref{ii} we calculate a radius $s$ so that the balls of radius $s$ around each point of $f(Y)$ are pairwise disjoint with some separation. Next we observe that the collection of balls $(B(y,R))_{y\in Y}$ cover $\R^{d}$ and we can estimate the total number of integer lattice points in each such ball. It remains to calculate a large enough integer $N$ so that for any $y,y'\in Y$ the set $\br{f(y)+\frac{1}{N}\Z^{d}}\cap B(f(y),s)$ contains more points than in $\Z^{d}\cap B(y',R)$. This allows for an injective mapping from $\Z^{d}\setminus Y$ to $\R^{d}\setminus f(Y)$ with the additional property that any $x$ in the domain admits $y\in Y$ with $x\in B(y,R)$ and $f(x)\in \br{f(y)+\frac{1}{N}\Z^{d}}\cap B(f(y),s)$. An arbitrary choice of such mapping determines a suitable extension of $f$.
\end{proof}

The proof of Theorem~\ref{thm:bil_ext_equiv} will be based on the following theorem, proved in Section~\ref{s:threading}:
\begin{restatable*}{thm}{restatethread}\label{thm:thread}
	Let $d,H\in\N$ with $d\geq 2$, $L\geq 1$ and 
	\begin{equation*}
	f\colon \br{\R^{d-1}\times\set{\frac{1}{2},H+\frac{1}{2}}}\cup\br{\Z^{d-1}\times\set{1,\ldots,H}}\to\R^{d}
	\end{equation*}
	be an $L$-bilipschitz mapping satisfying
	\begin{equation*}
	f(x)=x\qquad \text{for all }x\in \R^{d-1}\times \set{\frac{1}{2},H+\frac{1}{2}}
	\end{equation*} 
	and
	\begin{equation*}
	f\br{\Z^{d-1}\times\set{1,\ldots,H}}\subseteq \R^{d-1}\times\sqbr{\frac{1}{2},H+\frac{1}{2}},
	\end{equation*}
	Then there is a bilipschitz extension $F\colon\R^{d}\to \R^{d}$ of $f$ with
	\begin{equation*}
	\bilip(F)\leq d^{125d}L^{26d}H^{15d}.
	\end{equation*}
\end{restatable*}
\begin{proof}(Sketch)
	We aim to construct a bilipschitz mapping $\Theta\colon \R^{d}\to\R^{d}$ such that
	\begin{enumerate}[(i)]
		\item $\Theta(x)=x$ for all $x\in\R^{d-1}\times \br{\R\setminus \br{\frac{1}{2},H+\frac{1}{2}}}$, and
		\item $\Theta\circ f(x,m)=(x,m)$ for all $m\in [H]$ and $x\in\Z^{d-1}$.
	\end{enumerate}
	The bilipschitz $F\colon\R^{d}\to\R^{d}$ verifying Theorem~\ref{thm:thread} may then be defined by $F:=\Theta^{-1}$. The required transformation $\Theta$ will be built by composing and gluing together mappings of the form of $\tau$ from \ref{lemma:tube_spin}. By composing and gluing together these `swapping transformations' $\tau$ we first construct a bilipschitz mapping $\Psi$ which approximately projects each `image layer' $f(\Z^{d-1}\times\set{m})$ into a finer rescaling of the lattice $\Z^{d-1}\times\set{m}$, more specifically $\frac{1}{N}\Z^{d-1}\times\set{m}$ for some quantity $N\in\N$ controlled by $d$, $L$ and $H$. This is achieved by Lemma~\ref{lemma:inj_rounding}. 
	
	Construction of $\Psi$: First we perform simultaneously a family of `horizontal' swaps between points $f(x,m)$ for $x\in \Z^{d-1}$, $m\in[H]$ and nominated partners, say `$g(x,m)$', where $g(x,m)$ is a chosen nearby point with the same last coordinate as $f(x,m)$ and such that the first $(d-1)$-coordinates of $g(x,m)$ define a point in $\frac{1}{N}\Z^{d-1}$. Crucially this is done in such a way that the mapping $f(x,m)\mapsto g(x,m)$ is injective, so the points $g(x,m)$ are separated at least by distance $1/N$. Then we perform simultaneously a family of `vertical' swaps of the points $g(x,m)$ and chosen points `$h(x,m)$', where $h(x,m)$ has the same first $(d-1)$-coordinates as $g(x)$, but has last coordinate $m$.

	After applying $\Psi$ our task is to then shuffle the points of each lattice $\frac{1}{N}\Z^{d-1}\times\set{m}$ so that each point $\Psi\circ f(x,m)$ ends up at $(x,m)$.
	This is again accomplished by a bilipschitz mapping $\Upsilon\colon\R^{d}\to\R^{d}$ based on the `swapping transformations' of Lemma~\ref{lemma:tube_spin}; the spacing between each of the hyperplanes $\R^{d-1}\times\set{m}$ allows us to treat each lattice $\frac{1}{N}\Z^{d-1}\times\set{m}$ independently in the construction of $\Upsilon$. Finally, we will set $\Theta=\Upsilon \circ \Psi$. 
	
	Construction of $\Upsilon$: Since $\Psi$ is approximately a projection of each $f(\Z^{d-1}\times\set{m})$ into $\frac{1}{N}\Z^{d-1}\times\set{m}$ and the bilipschitz mapping $f$ is the identity on $\R^{d-1}\times\set{\frac{1}{2},H+\frac{1}{2}}$, the distance between $\Psi\circ f(x,m)$ and $(x,m)$ for $x\in\Z^{d-1}$, $m\in [H]$ is uniformly bounded above by a quantity $T$ determined by $L$, $d$ and $H$. In a sequence of three swapping transformations from Lemma~\ref{lemma:tube_spin}, we move each point $f(x,m)$ to $(x,m)$: The first swap moves $\Psi\circ f(x,m)\in \frac{1}{N}\Z^{d-1}\times\set{m}$ vertically up to the hyperplane of height $m+\delta(x)$, where $\delta(x)>0$ is chosen carefully. The second transformation shifts the new point at most distance $T$ along the hyperplane, to the point $(x,m+\delta(x))$. Finally, the last swap moves the new point back down to level $m$ and to the point $(x,m)$. In order to control the bilipschitz constant in this procedure, it is crucial that for all $x\in\Z^{d-1}$, the first swaps are performed simultaneously, the second swaps are performed simultaneously and then the third swaps are performed simultaneosly. For the first and third swaps, this is easy to achieve, since the vertical projection of the set of points involved is uniformly separated. To achieve a similar `separation' for the second swaps, we need to choose the quantity $\delta(x)>0$ carefully, so that whenever $x,y\in\Z^{d-1}$ are relatively close so that the second swap for $x$ and $y$ `might interact', we have that $\abs{\delta(x)-\delta(y)}$ is uniformly bounded below. This puts the second swaps for $x$ and $y$ onto distinct vertical levels with some minimum separation. A suitable choice of $\delta(x)$ for $x\in\Z^{d-1}$ corresponds to a proper colouring, provided by Brooks' Theorem~\cite{Brooks_1941}, of a certain infinite graph, with vertex set $\Z^{d-1}$.
\end{proof}

With Theorem~\ref{thm:thread} and Lemma~\ref{lemma:simple_gluing} we can already give the full proof of Theorem~\ref{thm:bil_ext_equiv}:

\begin{proof}[Proof of Theorem~\ref{thm:bil_ext_equiv}]
	It is clear that \eqref{bil_ext} implies \eqref{bil_wsep}.
	
	Let us now suppose that $T$, $G$ and $\wt{F}$ are given by \eqref{bil_wsep} and suppose $M_{1}\geq \bilip(\wt{F})$ $M_{2}\geq \bilip(G)$. For each $k\in\Z$ we set $A_k:=\R^{d-1}\times\br{(k-1)T+\frac{1}{2},kT+\frac{1}{2}}$ and apply Theorem~\ref{thm:thread} (appropriately shifted) to the $M_{1}M_{2}$-bilipschitz mapping $G^{-1}\circ \wt{F}$ restricted to the set
	\begin{equation*}
	V_{k}:=\partial A_k\cup\br{\Z^{d-1}\times \set{(k-1)T+1,(k-1)T+2,\ldots,kT}}.
	\end{equation*}
	We obtain for each $k\in\Z$ a bilipschitz mapping $F_{k}\colon\R^d\to \R^{d}$ such that 
	\begin{equation*}
	F_{k}(x)=G^{-1}\circ \wt{F}(x) \quad \text{for all }x\in V_{k},
	\quad\text{and}\quad
	\bilip(F_{k})\leq d^{125d}\br{M_{1}M_{2}}^{26d}T^{15d}=:C.
	\end{equation*}
	We define a mapping $\Phi\colon\R^d\to\R^d$ by prescribing
	\[
	\Phi(x):=F_k(x)\qquad\text{ whenever } x\in \R^{d-1}\times\sqbr{(k-1)T+\frac{1}{2},kT+\frac{1}{2}},\,k\in\Z.
	\]
	The mapping $\Phi$ is well-defined, since $\rest{F_k}{\R^{d-1}\times\set{(k-1)T+\frac{1}{2},kT+\frac{1}{2}}}$ is equal to the identity for every $k\in\Z$.
	
	Applying Lemma~\ref{lemma:simple_gluing} to $\Phi$, the collection of open sets $\br{A_k}_{k\in\Z}$ and $C$ defined above, we establish that $\Phi$ is $C$-bilipschitz. To complete the proof, it only remains to set $F:=G\circ \Phi$ and note that $\bilip(F)\leq CM_{2}\leq d^{125d}M_{1}^{26d}M_{2}^{27d}T^{15d}$.
\end{proof}

\section{Notation}
Much of our notation is shared by the companion article~\cite{DK_2dim}. The notation $[n]$ refers to $\set{1,\ldots, n}$ for $n\in\N$. The standard euclidean norm on $\R^d$ is denoted by $\norm{\cdot}$. 
For $x,y\in\R^d$, the closed line segment with endpoints $x$ and $y$ is denoted by $[x,y]$.

We write $e_1, \ldots, e_d$ for the standard orthonormal basis vectors of $\R^d$. The orthogonal projection to the $i$-th coordinate in $\R^d$ is denoted by $\proj_i$ and by $\proj_{\R^{d-1}}$ we mean the orthogonal projection to the first $d-1$ coordinates.

\paragraph{Sets in $\R^d$.}
Given $A\subset \R^d$ and $r>0$, we say that $A$ is \emph{$r$-separated} if $\norm{a-a'}\geq r$ for every $a,a'\in A, a\neq a'$. We say that $A$ is an \emph{$r$-net} if for every $x\in \R^d$ there is $a\in A$ such that $\norm{x-a}\leq r$. We call $A$ a \emph{separated net} if there are $r,s>0$ so that $A$ is an $s$-separated $r$-net.

The open and closed euclidean balls with centre $x\in\R^{d}$ and radius $r\geq 0$ are denoted by $B(x,r)$ and $\cl{B}(x,r)$ respectively. We also use the same notation for neighbourhoods of sets, i.e, $B(A,r):=\bigcup_{x\in A}B(x,r)$, where $A\subseteq\R^d$, and similarly for $\cl{B}(A,r)$.

For a set $A\subseteq\R^d$, we denote by $\partial A, \inter A$ and $\cl{A}$ the boundary, the interior and the closure of $A$ in $\R^d$, respectively.

\paragraph{Mappings.}
For a mapping $f\colon A'\subseteq\R^d\to\R^d$ such that $A\subseteq A'$, we write $\rest{f}{A}$ for the restriction of $f$ to $A$, whilst $\dom(f)$ and $\img(f)$ signify the domain and the image of $f$, respectively.

For $A\subseteq\R^{d}$ and a mapping $f\colon A\to\R^{n}$ we let
\begin{linenomath}
	\begin{equation*}
	\lip(f):=\sup\left\{\frac{\enorm{f(y)-f(x)}}{\enorm{y-x}}\colon x,y\in A, x\neq y\right\}.
	\end{equation*}
\end{linenomath}
If $f$ is injective, we let
\begin{linenomath}
	\begin{equation*}
	\bilip(f):=\max\set{\lip(f),\lip(f^{-1})}.
	\end{equation*}
\end{linenomath}
Given $L\geq 1$, we say that $f$ is $L$\emph{-Lipschitz} if $\lip(f)\leq L$ and that it is \emph{$L$-bilipschitz} if $\bilip(f)\leq L$. The mapping $f$ is said to be \emph{Lipschitz} (\emph{bilipschitz}) if there is $L<\infty$ such that $f$ is $L$-Lipschitz ($L$-bilipschitz).

\section{Bilipschitz Swapping and Gluing.}\label{section:swapping}\label{sec:proto}
The main aim of the present section is to prove the following theorem, which provides a bilipschitz mapping that efficiently performs `simultaneous swappings' of a family of specified pairs $(x,y_{x})\in\R^{d}\times \R^{d}$.
\begin{thm}\label{thm:simultaneous_switching}
	Let $X\subseteq \R^{d}$. For each $x\in X$ let $y_{x}\in\R^{d}$, $r_{x}$ be a positive real number and $\mf{U}_{x}:=B([x,y_{x}],r_{x})$.  Suppose that $(\mf{U}_{x})_{x\in X}$ is pairwise disjoint, 
	\begin{equation*}
	\sup_{x\in X,\, y_{x}\neq x}\frac{r_{x}}{\enorm{y_{x}-x}}\leq \frac{1}{2}\qquad\text{and}\qquad\sup_{x\in X,\, y_{x}\neq x}\frac{\enorm{y_{x}-x}}{r_{x}}<\infty.
	\end{equation*}
	Then there is a bilipschitz mapping $\Gamma\colon \R^{d}\to \R^{d}$ such that
	\begin{enumerate}[(i)]
		\item $\Gamma(x)=y_{x}$ and $\Gamma(y_{x})=x$ for all $x\in X$.
		\item $\Gamma(p)=p$ for all $\displaystyle p\in \R^{d}\setminus \bigcup_{x\in X, y_{x}\neq x}\mf{U}_{x}$.
		\item $\displaystyle \bilip(\Gamma)\leq \max\set{1, \sup_{x\in X,\, y_{x}\neq x}\frac{4\enorm{y_{x}-x}^{2}}{r_{x}^{2}}}$.
	\end{enumerate}
\end{thm}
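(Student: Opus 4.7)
The strategy is to paste together the single-swap maps provided by Lemma~\ref{lemma:tube_spin} (restatespin), one per pair $(x, y_x)$ with $y_x \neq x$, and then apply Lemma~\ref{lemma:simple_gluing} (restatesimpleglue) to lift the local bilipschitz estimates to a global one. For each $x \in X$ with $y_x \neq x$, the condition $r_x \leq \tfrac{1}{2}\|y_x - x\|$ allows me to invoke Lemma~\ref{lemma:tube_spin} to produce a $\frac{4\|y_x-x\|^2}{r_x^2}$-bilipschitz self-map $\tau_x \colon \R^d \to \R^d$ exchanging $x$ and $y_x$ and coinciding with the identity on $\R^d \setminus \mf{U}_x$. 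Exploiting the pairwise disjointness of $(\mf{U}_x)_{x\in X}$, I then set
\[
\Gamma(p) := \begin{cases} \tau_x(p), & p \in \cl{\mf{U}_x} \text{ for some } x \in X \text{ with } y_x \neq x, \\ p, & \text{otherwise}. \end{cases}
\]
At points where several closures $\cl{\mf{U}_x}$ meet (necessarily on their boundaries, by disjointness of the open tubes), each relevant $\tau_x$ is already the identity, so the definition is unambiguous. Properties (i) and (ii) then follow immediately from the corresponding properties of each $\tau_x$; the degenerate case in which every $y_x = x$ is handled by taking $\Gamma = \id$.

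For (iii) I will apply Lemma~\ref{lemma:simple_gluing} with pairwise disjoint open pieces $A_x := \mf{U}_x$ (one for each $x \in X$ with $y_x \neq x$) together with $A_0 := \R^d \setminus \bigcup_x \cl{\mf{U}_x}$. On each $\cl{A_x}$, the restriction $\Gamma|_{\cl{A_x}} = \tau_x|_{\cl{\mf{U}_x}}$ is $\frac{4\|y_x-x\|^2}{r_x^2}$-bilipschitz and maps $\cl{A_x}$ onto itself (being the restriction of a bilipschitz self-map of $\R^d$ that is the identity off $\mf{U}_x$). On $\cl{A_0}$ and on $\bigcup_i \partial A_i$ the mapping $\Gamma$ reduces to the identity, so it is trivially $1$-bilipschitz there with the correct image. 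Lemma~\ref{lemma:simple_gluing} then yields $\bilip(\Gamma) \leq \max\bigl(1,\, \sup_{x,\,y_x\neq x} \tfrac{4\|y_x-x\|^2}{r_x^2}\bigr)$, which is exactly the estimate asserted in (iii). The finiteness of this supremum is precisely the hypothesis $\sup_{x,\,y_x\neq x}\|y_x-x\|^2/r_x^2 < \infty$.

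The main technical obstacle I anticipate is verifying that the hypotheses of Lemma~\ref{lemma:simple_gluing} genuinely hold for an arbitrary family $(\mf{U}_x)_{x\in X}$: I need $A_0$ to be open (equivalently, $\bigcup_x \cl{\mf{U}_x}$ to be closed) and the closures $\cl{A_i}$ to cover $\R^d$. Both reduce to local finiteness of $(\cl{\mf{U}_x})_{x \in X}$, which does not obviously follow from the hypotheses as stated. I expect to extract local finiteness via a volume/packing estimate, using that disjointness together with $\sup_x \|y_x-x\|^2/r_x^2 < \infty$ forces tubes of uniformly controlled aspect ratio to pack with bounded density in any compact region. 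A cleaner alternative would be to bypass the topological issue and prove (iii) directly, by taking arbitrary $p,q\in\R^d$, partitioning the segment $[p,q]$ at the finitely many points where it enters or exits each $\mf{U}_x$ it meets (at such crossing points $\Gamma$ acts as the identity), and summing the per-piece bilipschitz bounds against $\|p-q\|$ — which is essentially the direct argument underlying Lemma~\ref{lemma:simple_gluing} itself and avoids any appeal to local finiteness.
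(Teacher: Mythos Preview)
Your instinct to route through Lemma~\ref{lemma:simple_gluing} is natural, but the obstacle you flag is real and your proposed packing fix does not close it. The hypotheses give only that the aspect ratio $\|y_x-x\|/r_x$ is bounded above and below; there is no lower bound whatsoever on $r_x$ itself. So nothing prevents, say in $\R^2$, points $x_n=(1/n,0)$, $y_{x_n}=(1/n,\varepsilon_n)$, $r_{x_n}=\varepsilon_n/2$ with $\varepsilon_n$ chosen small enough to make the tubes disjoint. These tubes have uniformly bounded aspect ratio, are pairwise disjoint, and accumulate at the origin. Then $\bigcup_x\cl{\mf{U}_x}$ is not closed, your $A_0$ is not open, and Lemma~\ref{lemma:simple_gluing} does not apply. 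A volume comparison cannot help: the total volume of infinitely many shrinking tubes inside a ball can be as small as you like.

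Your ``cleaner alternative'' is the right direction and is essentially what the paper does, but your description still implicitly assumes local finiteness when you speak of partitioning $[p,q]$ at the \emph{finitely many} entry/exit points. The segment may well cross infinitely many tubes. The point you are missing is that one never needs more than two intermediate points. Given $p\in\mf{U}_{x_1}$ and $q\in\mf{U}_{x_2}$ with $x_1\neq x_2$, pick $z_1\in[p,q]\cap\partial\mf{U}_{x_1}$ and $z_2\in[z_1,q]\cap\partial\mf{U}_{x_2}$. Disjointness of the open tubes forces $z_1,z_2\notin\bigcup_x\mf{U}_x$, hence $\Gamma(z_i)=z_i$, and the triangle inequality with these three subsegments gives the Lipschitz bound directly, irrespective of how many other tubes $[p,q]$ happens to meet. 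The case $p\in\mf{U}_{x_1}$, $q\notin\bigcup_x\mf{U}_x$ needs only one such point. The inverse Lipschitz bound follows by the evident symmetry between $\Gamma$ and $\Gamma^{-1}$. This is exactly the paper's argument; Lemma~\ref{lemma:simple_gluing} is not invoked here at all.
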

The `building blocks' of the mapping $\Gamma$ from Theorem~\ref{thm:simultaneous_switching} will be bilipschitz mappings which perform a single swap of a specified pair $(x,y)\in\R^{d}\times \R^{d}$. So our first step towards the proof of Theorem~\ref{thm:simultaneous_switching} will be the construction of such mappings $\tau$ in the next Lemmas~\ref{lemma:spin} and \ref{lemma:tube_spin}. We will then need an important `gluing lemma', Lemma~\ref{lemma:simple_gluing}, to glue a family of `single swap mappings' $\tau$ together to form $\Gamma$. Lemma~\ref{lemma:simple_gluing} is also useful beyond the proof of Theorem~\ref{thm:simultaneous_switching}.
\begin{lemma}\label{lemma:spin}
	Let $d\in\N$, $d\geq 2$ and for $\theta\in \R$ let $R_{\theta}\colon \R^{d}\to\R^{d}$ be the linear mapping with matrix
	\begin{equation*}
	R_{\theta}=\begin{pmatrix}
	\cos\theta & -\sin\theta & 0 & 0 &  \ldots & 0\\
	\sin\theta & \cos\theta & 0 & \ddots& \ddots & 0\\
	0 & 0 &1 & 0 &  \ddots &0\\
	0 & 0 & 0 & 1 & \ddots& 0\\
	0 & 0 & 0 & 0 & \ddots & 0\\
	0 & 0 & 0 & 0 & 0 & 1 
	\end{pmatrix}.
	\end{equation*}
	Here we use the same notation for the linear mapping and its matrix. 
	\begin{enumerate}[(i)]
		\item\label{different_rotations} Let $\theta_{1},\theta_{2}\in\R$ and $y\in\R^{d}$. Then
		\begin{equation*}
		\enorm{R_{\theta_{2}}(y)-R_{\theta_{1}}(y)}\leq \abs{\theta_{2}-\theta_{1}}\enorm{y}.
		\end{equation*}
		\item\label{spin_Lipschitz} Let $t_{0}>0$ and $\psi\colon [0,\infty)\to \R$ be a Lipschitz mapping with $\psi(t)=0$ for all $t\geq t_{0}$. Let $\Phi=\Phi(\psi)\colon\R^{d}\to\R^{d}$ be defined by 
		\begin{equation*}
		\Phi(x)=R_{\psi(\enorm{x})}(x).
		\end{equation*}
		Then $\Phi$ is $\br{\lip(\psi)t_{0}+1}$-bilipschitz and $\Phi(x)=x$ for all $x\in \R^{d}\setminus B(0,t_{0})$.
	\end{enumerate}
\end{lemma}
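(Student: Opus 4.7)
\textbf{Plan of proof of Lemma~\ref{lemma:spin}.}

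For part \eqref{different_rotations}, the plan is to exploit that $R_{\theta_{2}}-R_{\theta_{1}}$ acts non-trivially only on the first two coordinates and therefore $\enorm{R_{\theta_{2}}(y)-R_{\theta_{1}}(y)}$ coincides with the norm of the $2$-dimensional vector obtained by applying the $2\times 2$ upper-left block of $R_{\theta_{2}}-R_{\theta_{1}}$ to $(y_{1},y_{2})$. First I will compute this $2\times 2$ block explicitly, noting that its image of $(y_{1},y_{2})$ has squared norm $(2-2\cos(\theta_{2}-\theta_{1}))(y_{1}^{2}+y_{2}^{2})$. Then I will apply the elementary identity $\sqrt{2-2\cos\alpha}=2\lvert\sin(\alpha/2)\rvert\leq\lvert\alpha\rvert$ with $\alpha=\theta_{2}-\theta_{1}$ and use $\sqrt{y_{1}^{2}+y_{2}^{2}}\leq\enorm{y}$. (Alternatively I could write $R_{\theta_{2}}y-R_{\theta_{1}}y=\int_{\theta_{1}}^{\theta_{2}}\dot R_{\theta}y\,d\theta$ and estimate $\lVert\dot R_{\theta}y\rVert=\sqrt{y_{1}^{2}+y_{2}^{2}}\leq\enorm{y}$, but the trigonometric route is shorter.)

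For part \eqref{spin_Lipschitz} I will first dispose of the easy claims: if $\enorm{x}\geq t_{0}$ then $\psi(\enorm{x})=0$, so $R_{\psi(\enorm{x})}$ is the identity matrix and $\Phi(x)=x$. Since each $R_{\theta}$ is an isometry, $\enorm{\Phi(x)}=\enorm{x}$, so the mapping $x\mapsto R_{-\psi(\enorm{x})}(x)$ is a two-sided inverse of $\Phi$, establishing bijectivity.

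To bound $\lip(\Phi)$ I will use the two symmetric decompositions
\begin{equation*}
\Phi(x)-\Phi(y)=R_{\psi(\enorm{x})}(x-y)+\bigl(R_{\psi(\enorm{x})}-R_{\psi(\enorm{y})}\bigr)(y)
\end{equation*}
and the analogous one with the roles of $x,y$ swapped, so that the `second term' can be estimated by $\min\{\enorm{x},\enorm{y}\}$ rather than by an arbitrary one of them. Applying part \eqref{different_rotations} gives $\bigl\|(R_{\psi(\enorm{x})}-R_{\psi(\enorm{y})})(y)\bigr\|\leq\lvert\psi(\enorm{x})-\psi(\enorm{y})\rvert\cdot\enorm{y}$, and combining with $\lvert\psi(\enorm{x})-\psi(\enorm{y})\rvert\leq\lip(\psi)\,\bigl|\enorm{x}-\enorm{y}\bigr|\leq\lip(\psi)\enorm{x-y}$ (using reverse triangle inequality and the fact that $\psi$ is Lipschitz on the whole of $[0,\infty)$) yields
\begin{equation*}
\enorm{\Phi(x)-\Phi(y)}\leq\enorm{x-y}+\lip(\psi)\min\{\enorm{x},\enorm{y}\}\enorm{x-y}.
\end{equation*}
A case split now finishes: if $\min\{\enorm{x},\enorm{y}\}\leq t_{0}$ the right-hand side is bounded by $(1+\lip(\psi)t_{0})\enorm{x-y}$; if $\min\{\enorm{x},\enorm{y}\}>t_{0}$ then both $\psi(\enorm{x})$ and $\psi(\enorm{y})$ vanish, $\Phi$ agrees with the identity on both points, and the bound is trivial. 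The same argument applied to $\Phi^{-1}(x)=R_{-\psi(\enorm{x})}(x)$ (which satisfies the hypotheses with the same $\lip$ constant and the same $t_{0}$) gives the same upper bound on $\lip(\Phi^{-1})$.

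The only minor subtlety is noticing that neither decomposition alone is enough, since one gets $\enorm{y}$ and the other $\enorm{x}$ in the error term; choosing whichever is smaller is essential so that the bound remains meaningful regardless of whether $x$ or $y$ lies deep inside $B(0,t_{0})$ while the other is far away. Everything else is routine.
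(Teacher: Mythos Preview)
Your proof is correct and follows essentially the same route as the paper's: for part~\eqref{different_rotations} you make explicit the trigonometric identity behind the $2$-dimensional chord inequality that the paper invokes directly, and for part~\eqref{spin_Lipschitz} your device of taking both decompositions and using $\min\{\enorm{x},\enorm{y}\}$ is equivalent to the paper's move of simply naming the point inside $B(0,t_{0})$ as $y$ and using the single decomposition with the error term in $\enorm{y}$.
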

\begin{proof}
	\begin{enumerate}[(i)]
		\item For $\alpha\in\R$, let $\rho_{\alpha}\colon \R^{2}\to\R^{2}$ denote the anticlockwise rotation around the origin in $\R^{2}$ through angle $\alpha$. Then, we observe that 
		\begin{equation*}
		R_{\theta}(x_{1},\ldots,x_{d})=(\rho_{\theta}(x_{1},x_{2}),x_{3},\ldots,x_{d})
		\end{equation*}
		for all $\theta\in \R$ and $x=(x_{1},\ldots,x_{d})\in\R^{d}$. Therefore,
		\begin{multline*}
		\enorm{R_{\theta_{2}}(y)-R_{\theta_{1}}(y)}=\enorm{\rho_{\theta_{2}}(y_{1},y_{2})-\rho_{\theta_{1}}(y_{1},y_{2})}\\
		\leq\abs{\theta_{2}-\theta_{1}}\enorm{(y_{1},y_{2})}\leq \abs{\theta_{2}-\theta_{1}}\enorm{y}.
		\end{multline*}
		\item First, note that $\Phi(\psi)\colon\R^{d}\to \R^{d}$ is a bijection with $\Phi(\psi)^{-1}=\Phi(-\psi)$ and $\Phi(\psi)(x)=x$ for all $x\in\R^{d}\setminus B(0,t_{0})$. Letting $\zeta\in\set{-\psi,+\psi}$ it remains to verify that $\Phi(\zeta)$ is $\br{\lip(\psi)t_{0}+1}$-Lipschitz. Since $\Phi(\zeta)$ coincides with the identity outside of $B(0,t_{0})$ it suffices to verify the Lipschitz bound between pairs of points where at least one point lies in $B(0,t_{0})$. Let $x\in\R^{d}$ and $y\in B(0,t_{0})$. Then applying part~\eqref{different_rotations} we get
		\begin{multline*}
		\enorm{\Phi(\zeta)(y)-\Phi(\zeta)(x)}\leq \enorm{R_{\zeta(\enorm{y})}(y)-R_{\zeta(\enorm{x})}(y)}+\enorm{R_{\zeta(\enorm{x})}(y-x)}\\
		\leq \abs{\zeta(\enorm{y})-\zeta(\enorm{x})}t_{0}+\enorm{y-x}\\
		\leq \lip(\zeta)\abs{\enorm{y}-\enorm{x}}t_{0}+\enorm{y-x}\leq \br{\lip(\zeta)t_{0}+1}\enorm{y-x}.\qedhere
		\end{multline*}
	\end{enumerate}	
\end{proof}

\restatespin
\begin{proof}
	The statement is invariant under scaling, so we may assume that $\enorm{y-x}=1$ and replace $r$ with $\eta:=\frac{r}{\enorm{y-x}}\leq\frac{1}{2}$. We may now also assume that $x=\frac{1}{2}e_{1}$ and $y=-\frac{1}{2}e_{1}$. We introduce a linear isomorphism $T\colon \R^{d}\to\R^{d}$ which maps the largest ellipsoid $E$ inside $B([x,y],\eta)$ centred at $0$ and with axes parallel to the coordinate axes to a ball $T(E)=B(0,R)$ of some radius $R>1$ so that $Tx=e_{1}$ and $Ty=-e_{1}$. The last condition tells us that $Te_{1}=2e_{1}$, which then implies that $R=2 \br{\frac{1}{2}+\eta}$. For $i=2,3,\ldots,d$, we then have that $T(\eta e_{i})=Re_{i}$. Thus, we get that $T$ is defined by
	\begin{equation*}
	Te_{i}=\begin{cases}
	2e_{1} & \text{if }i=1,\\
	\br{2+\frac{1}{\eta}} e_{i} & \text{if }i=2,3,\ldots,d.
	\end{cases}
	\end{equation*}
	Note that
	\begin{equation*}
	T^{-1}\br{B\br{0,1+2\eta}}\subseteq B([x,y],\eta), \qquad
	\lip(T)=2+\frac{1}{\eta},
	\qquad \lip(T^{-1})=\frac{1}{2}.
	\end{equation*}
	We now define $\psi\colon [0,\infty)\to \R$ by
	\begin{equation*}
	\psi(t)=\begin{cases}
	\pi & \text{if }0\leq t\leq 1,\\
	\frac{\br{1+2\eta-t}\pi}{2\eta} & \text{if }1\leq t\leq 1+2\eta,\\
	0 & \text{if }t\geq 1+2\eta.
	\end{cases}
	\end{equation*}
	and note that $\lip(\psi)=\frac{\pi}{2\eta}$. The desired mapping may now be defined as $\tau:=T^{-1}\circ \Phi(\psi)\circ T$, where $\Phi(\psi)$ is given by the conclusion of Lemma~\ref{lemma:spin} applied with $t_{0}=1+2\eta$, so that
	\begin{equation*}
	\bilip\br{\Phi(\psi)}\leq \frac{\pi}{2\eta}\cdot \br{1+2\eta}+1\leq \frac{\pi}{\eta}+1,
	\end{equation*}
	Multiplying $\bilip\br{\Phi(\psi)}$ by $\lip(T^{-1})\lip(T)\leq \frac{1}{\eta}$ we obtain $\bilip(\tau)\leq\frac{\pi}{\eta^{2}}+\frac{1}{\eta}\leq \frac{4}{\eta^{2}}$. With the properties of $\Phi(\psi)$ given by Lemma~\ref{lemma:spin}, property~\eqref{tau1} of $\tau$ follows from \[T\br{\R^{d}\setminus B([x,y],r)}\subseteq \R^{d}\setminus B(0,1+2\eta).\] Moreover, the calculation
	\begin{equation*}
	\tau(x)=T^{-1}\circ \Phi(\psi)\circ T(x)=T^{-1}\br{ R_{\psi(1)}(e_{1})}=T^{-1}\br{R_{\pi}(e_{1})}=T^{-1}(-e_{1})=y,
	\end{equation*}
	and a similar  one for $\tau(y)$ verify property~\eqref{tau2} of $\tau$.
\end{proof}

\restatesimpleglue
\begin{proof}
	We first show that $f$ is injective. Let $x\in\cl{A_i}$ and $y\in\cl{A_j}$ for some $i,j\in I$ such that $f(x)=f(y)$. If $i=j$, we use that $f|_{\cl{A_{i}}}$ is bilipschitz to deduce $x=y$. So assume $i\neq j$. Then, the assumptions $A_{i}\cap A_{j}=\emptyset$ and $f(\cl{A_{k}})=\cl{A_{k}}$ for $k=i,j$ imply $f(x)=f(y)\in \partial A_{i}\cap \partial A_{j}$. By Brouwer's Invariance of Domain~\cite[Thm~2B.3]{Hatcher02}, $\partial f(A_k)=f(\partial A_k)$ for every $k\in I$. Thus, $x\in\partial A_i$ and $y\in\partial A_j$. However, $\bigcup_{i\in I}\partial A_{i}\subseteq \R^{d}\setminus \bigcup_{i\in I}A_{i}$ and $f|_{\R^{d}\setminus \bigcup_{i\in I}A_{i}}$ is bilipschitz, so, again, we must have $x=y$. Hence, $f|_{\bigcup_{i\in I}\cl{A_{i}}}$ is injective. By hypothesis $f|_{\R^{d}\setminus \bigcup_{i\in I}A_{i}}$ is also injective and the images of $f|_{\bigcup_{i\in I}A_{i}}$ and $f|_{\R^{d}\setminus \bigcup_{i\in I}A_{i}}$ do not intersect.
	Consequently, $f$ is injective and $f^{-1}$ is well-defined.
	
	Now observe that the assumptions of the lemma are symmetric with respect to interchanging $f$ and $f^{-1}$. Therefore, it suffices to show that $f$ is $C$-Lipschitz.
	
	By the assumptions of the lemma, the $C$-Lipschitz inequality is already satisfied for pairs $x,y\in\R^{d}$ with $x,y\in\cl{A_{i}}$ for some $i\in I$, or $x,y\in\R^{d}\setminus \bigcup_{i\in I}A_{i}$. We verify the inequality for the remaining types of pairs $x,y\in\R^{d}$. 
	
	Let $i,j\in I, i\neq j$, and $x\in A_i\setminus\cl{A_j}$ and $y\in A_j\setminus\cl{A_i}$. We set $z_1:=x$, $z_4:=y$ and take $z_2$ as any point of $[x,y]\cap\partial A_i$. Since $x\in A_i$ and $y\notin\cl{A_i}$, $z_2$ is well-defined. Note that $\partial A_{i}\cap A_{j}=\emptyset$, so $z_{2}\notin A_{j}$. Finally, we take $z_3$ as any point of $[z_2, y]\cap\partial A_j$.
	Therefore, by the assumptions on the $C$-bilipschitz property,
	\begin{equation*}
	\enorm{f(y)-f(x)}\leq \sum_{k=2}^{4}\enorm{f(z_{k})-f(z_{k-1})}\leq C\sum_{k=2}^{4}\enorm{z_{k}-z_{k-1}}=C\enorm{y-x}.
	\end{equation*}
	The last remaining case to consider is $x\in\R^{d}\setminus \bigcup_{i\in I}A_{i}$ and $y\in A_{j}$ for some $j\in I$. Then a similar argument as above applies with $z_{1}:=x$, $z_{3}:=y$ and $z_{2}$ as any point of $[x,y]\cap\partial A_{j}$.
\end{proof}

We now have all the necessary tools to prove Theorem~\ref{thm:simultaneous_switching}.
\begin{proof}[Proof of Theorem~\ref{thm:simultaneous_switching}]
	For each $x\in X$ with $y_{x}=x$ let $\tau_{x}\colon \R^{d}\to\R^{d}$ be the identity mapping. For each remaining $x\in X$ let $\tau_{x}\colon\R^{d}\to \R^{d}$ be the mapping given by Lemma~\ref{lemma:tube_spin} applied to $x$, $y_{x}$ and $r_{x}$. Setting $C:=\displaystyle\max\set{1, \sup_{x\in X,\, y_{x}\neq x}\frac{4\enorm{y_{x}-x}^{2}}{r_{x}^{2}}}$, note that each $\tau_x$ is $C$-bilipschitz and equal to the identity outside $\mf{U}_x$.  We then define $\Gamma\colon \R^{d}\to \R^{d}$ by 
	\begin{equation*}
	\Gamma(p)=\begin{cases}
	\tau_{x}(p) & \text{if $x\in X$ and $p\in \mf{U}_{x}$},\\
	p & \text{if }p\in \R^{d}\setminus \bigcup_{x\in X}\mf{U}_{x}.
	\end{cases}
	\end{equation*}
	Finally, we may apply Lemma~\ref{lemma:simple_gluing} to the collection of open sets $(\mf{U}_{x})_{x\in X}$ and the mapping $\Gamma\colon \R^{d}\to \R^{d}$ to conclude that $\Gamma$ is $C$-bilipschitz.
\end{proof}

\section{Reduction to the integer lattice.}\label{s:generalise}
This section is dedicated to the full proof of Theorem~\ref{thm:Zd_equiv}, in which we reduce the bilipschitz extension problem for all separated nets to that for the integer lattice. Before proceeding, the reader may wish to read the sketch proof of Theorem~\ref{thm:Zd_equiv} in the introduction.
\begin{lemma}\label{lemma:rounding_to_integers}
	Let $d\in\N$, $d\geq 2$, $r>0$ and $A\subseteq \R^{d}$ be $r$-separated. Then there exists a $16\max\set{\frac{3d}{r},1}$-bilipschitz mapping $\Phi\colon \R^{d}\to\R^{d}$ such that $\Phi(A)\subseteq \Z^{d}$.
\end{lemma}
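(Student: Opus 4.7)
The strategy from the introduction has two steps: first rescale $A$ so that its separation comfortably exceeds $\sqrt{d}$, then apply Theorem~\ref{thm:simultaneous_switching} to swap each rescaled point of $A$ with its nearest integer lattice point. Concretely, set $K:=\max\{3d/r,1\}\geq 1$ and $A':=KA$. The dilation $x\mapsto Kx$ is $K$-bilipschitz, and $A'$ is $(Kr)$-separated with $Kr\geq 3d$. It thus suffices to produce a $16$-bilipschitz mapping $\Gamma\colon\R^d\to\R^d$ with $\Gamma(A')\subseteq\Z^d$, whereupon $\Phi:=\Gamma\circ(K\cdot\id)$ will be $16K$-bilipschitz and satisfy $\Phi(A)=\Gamma(A')\subseteq\Z^d$, as required.

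For each $x\in A'$ choose $\eta(x)\in\Z^d$ minimising $\|x-\eta(x)\|$, breaking ties arbitrarily; since the Voronoi cells of $\Z^d$ are unit cubes, $\|x-\eta(x)\|\leq\tfrac{\sqrt{d}}{2}$. Let $X:=\{x\in A':\eta(x)\neq x\}$ and, for each $x\in X$, define $y_x:=\eta(x)$, $r_x:=\tfrac{\|y_x-x\|}{2}\leq\tfrac{\sqrt{d}}{4}$, and $\mf{U}_x:=B([x,y_x],r_x)$. Every point of $\mf{U}_x$ lies within distance $\|y_x-x\|+r_x\leq\tfrac{3\sqrt{d}}{4}$ of $x$, so $\mf{U}_x\subseteq B(x,\tfrac{3\sqrt{d}}{4})$. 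For distinct $x,x'\in A'$, the bound $\|x-x'\|\geq Kr\geq 3d\geq\tfrac{3\sqrt{d}}{2}$ (which uses $d\geq 2$) simultaneously delivers (a) disjointness of the tubes $\mf{U}_x$ and $\mf{U}_{x'}$ whenever $x,x'\in X$, and (b) the absence of any point of $A'\cap\Z^d=A'\setminus X$ from any tube $\mf{U}_x$.

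The choice $r_x=\|y_x-x\|/2$ gives $r_x/\|y_x-x\|=\tfrac{1}{2}$ and $\|y_x-x\|^2/r_x^2=4$ uniformly over $x\in X$, so the hypotheses of Theorem~\ref{thm:simultaneous_switching} are satisfied. We obtain $\Gamma\colon\R^d\to\R^d$ with $\Gamma(x)=y_x\in\Z^d$ for all $x\in X$, $\Gamma(p)=p$ on the complement of $\bigcup_{x\in X}\mf{U}_x$, and $\bilip(\Gamma)\leq\max\{1,4\cdot 4\}=16$. By (b), $\Gamma$ fixes every point of $A'\setminus X\subseteq\Z^d$, so combined with $\Gamma(X)\subseteq\Z^d$ this yields $\Gamma(A')\subseteq\Z^d$ and completes the construction.

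The only delicate point is verifying the two hypotheses of Theorem~\ref{thm:simultaneous_switching}: pairwise disjointness of the tubes $\mf{U}_x$, and the absence of stray points of $A'\cap\Z^d$ from them. Both reduce to the single inequality $Kr\geq 3d\geq\tfrac{3\sqrt{d}}{2}$, which is guaranteed by the definition of $K$ together with $d\geq 2$, so no genuine obstacle arises. Note in particular that we impose no requirement on the behaviour of $\Phi$ on $\Z^d\setminus A$, so the fact that $\Gamma$ may move the partner points $y_x=\eta(x)\in\Z^d$ off the lattice (sending them to $x\in A'\setminus\Z^d$) is immaterial to the conclusion $\Phi(A)\subseteq\Z^d$.
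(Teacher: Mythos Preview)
Your proof is correct and follows essentially the same approach as the paper: rescale by $K=\max\{3d/r,1\}$ to make the set $3d$-separated, pick nearest lattice points, and apply Theorem~\ref{thm:simultaneous_switching} with $r_x=\tfrac12\|y_x-x\|$ to obtain a $16$-bilipschitz swap. The only cosmetic difference is that the paper applies Theorem~\ref{thm:simultaneous_switching} to all of the rescaled set (assigning $r_x=\sqrt{d}$ when $y_x=x$) rather than excluding the already-integral points and checking separately that they avoid the tubes, but this is immaterial.
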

\begin{proof}
	If $r\geq3d$ let $\Psi$ stand for the identity mapping of $\R^{d}$. Otherwise let $\Psi\colon\R^{d}\to\R^{d}$ be the rescaling defined by $\Psi(x)=\frac{3dx}{r}$ for all $x\in\R^{d}$. Then $\Psi(A)$ is $3d$-separated. For each $x\in A$ we choose $y_{x}\in \Z^{d}$ such that $\enorm{y_{x}-\Psi(x)}\leq\sqrt{d}/2$. Then for $x,x'\in A$ with $x\neq x'$ we have $\enorm{y_{x}-y_{x'}}\geq \enorm{\Psi(x)-\Psi(x')}-\sqrt{d}\geq 3d-\sqrt{d}> 2\sqrt{d}$. Therefore the collection of balls $(B(y_{x},\sqrt{d}))_{x\in A}$ is pairwise disjoint. For each $x\in X$ let 
	\begin{equation*}
	r_{x}:=\begin{cases}
	\enorm{y_{x}-\Psi(x)}/2 & \text{if }y_{x}\neq \Psi(x),\\
	\sqrt{d} & \text{otherwise.}
	\end{cases}
	\end{equation*}
	and observe that $\mf{U}_{x}:=B([\Psi(x),y_{x}],r_{x})\subseteq B(y,\sqrt{d})$. Hence $(\mf{U}_{x})_{x\in A}$ is pairwise disjoint.
	Let $\Pi\colon\R^{d}\to\R^{d}$ be the 16-bilipschitz mapping given by the application of Theorem~\ref{thm:simultaneous_switching} to $X=\Psi(A)$, $\Psi(x)$ (in place of $x$), $y_{x}$ and $r_{x}$ for each $x\in A$.
	All that remains is to set $\Phi=\Pi\circ \Psi$.
\end{proof}

\begin{lemma}~\label{lemma:ext_to_lattice}
	Let $d\in\N$, $d\geq 2$, $L\geq 1$, $X\subseteq \Z^{d}$ and $f\colon X\to \R^{d}$ be an $L$-bilipschitz mapping. Suppose that there exists $\lambda\geq 1$ such that $\R^{d}\subseteq \bigcup_{x\in X}\cl{B}(x,\lambda)$. Then there exists a bilipschitz extension $F\colon \Z^{d}\to\R^{d}$ of $f$ with
	\begin{equation*}
	\lip(F)\leq 4\lambda L, \qquad \lip(F^{-1})\leq 24\lambda^{2}L\sqrt{d}.
	\end{equation*}
\end{lemma}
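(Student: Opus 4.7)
The plan is to follow the sketch given in the introduction, implementing each step explicitly. The final mapping $F\colon\Z^d\to\R^d$ will coincide with $f$ on $X$ and will send each remaining integer point $x\in\Z^d\setminus X$ to an appropriately chosen point of a fine lattice sitting in a tiny ball around $f(y(x))$, where $y(x)\in X$ is a designated nearest point of $x$ in $X$.

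First I would fix the local scale. Since $f$ is $L$-bilipschitz and $X\subseteq\Z^d$, the image $f(X)$ is $(1/L)$-separated, so setting $s:=\tfrac{1}{4L}$ makes the balls $\br{B(f(y),s)}_{y\in X}$ pairwise disjoint with gaps of at least $1/(2L)$ between them. Next I would count: the number of integer lattice points of $\cl{B}(y,\lambda)$ is bounded by $(2\lambda+1)^d\leq(3\lambda)^d$, while the fine lattice $\br{f(y)+\tfrac{1}{N}\Z^d}\cap B(f(y),s)$ contains at least $\abs{\Z^d\cap B(0,sN)}$ points, which in turn is at least $(2\lfloor sN/\sqrt{d}\rfloor+1)^d$. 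Choosing $N:=\ceil{12L\lambda\sqrt{d}}$ makes the latter count strictly larger than $(3\lambda)^d$, with room to spare even after deleting $f(y)$ from the fine lattice.

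I would then choose, for each $x\in\Z^d\setminus X$, a fixed point $y(x)\in X$ with $\enorm{x-y(x)}\leq\lambda$ (which exists by the covering hypothesis), and set $V_y:=\set{x\in\Z^d\setminus X : y(x)=y}\cup\set{y}$. Since $V_y\subseteq\cl{B}(y,\lambda)\cap\Z^d$, by the count above we can pick, for each $y\in X$, an injection $\iota_y\colon V_y\setminus\set{y}\to\br{\br{f(y)+\tfrac{1}{N}\Z^d}\cap B(f(y),s)}\setminus\set{f(y)}$. The desired extension is then
\begin{equation*}
F(x):=\begin{cases} f(x) & x\in X,\\ \iota_{y(x)}(x) & x\in\Z^d\setminus X.\end{cases}
\end{equation*}
Global injectivity is immediate: within a single $V_y$ it follows from injectivity of $\iota_y$, while across different $y,y'\in X$ the images land in the disjoint balls $B(f(y),s)$ and $B(f(y'),s)$.

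The bilipschitz bounds will then be verified by a routine case analysis over whether the two points lie in $X$ or not. For the Lipschitz direction, using $\enorm{F(x)-f(y(x))}\leq s$, $s\leq 1$, $\enorm{x-y(x)}\leq\lambda$ and the fact that distinct integer points satisfy $\enorm{x-x'}\geq 1$, the triangle inequality collapses everything to $\lip(F)\leq 4\lambda L$. For $\lip(F^{-1})$ the main case is when $x,x'\in V_y\setminus\set{y}$ for the same $y$: then $F(x),F(x')$ lie in the lattice $f(y)+\tfrac{1}{N}\Z^d$ and are distinct, so $\enorm{F(x)-F(x')}\geq 1/N$, while $\enorm{x-x'}\leq 2\lambda$, yielding a ratio of at most $2\lambda N\lesssim\lambda^2 L\sqrt{d}$. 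The remaining cases (one of the points in $X$, or the two $y(x),y(x')$ distinct) are controlled by the $1/(2L)$-separation between different balls $B(f(y),s)$ combined with $L$-bilipschitzness of $f$, and contribute only lower order constants. The main (mild) obstacle is calibrating $s$ and $N$ so that both the `same $V_y$' count and the `different $y$' separation are large enough simultaneously to yield the advertised bound $24\lambda^2 L\sqrt{d}$; this is essentially a matter of tracking constants.
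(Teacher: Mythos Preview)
Your proposal is correct and follows essentially the same approach as the paper: choose a nearest point $y(x)\in X$ for each $x\in\Z^d$, count lattice points to ensure an injection into the fine grid $f(y)+\tfrac{1}{N}\Z^d$ inside $B(f(y),\tfrac{1}{4L})$, and verify the bilipschitz bounds by the same case split (same $y$ versus different $y$). The only cosmetic discrepancy is your ceiling in $N=\ceil{12L\lambda\sqrt{d}}$; the paper takes $N=12L\lambda\sqrt{d}$ directly (the grid $\tfrac{1}{N}\Z^d$ needs no integer $N$), which is what makes the constant come out exactly as $2\lambda N=24\lambda^{2}L\sqrt{d}$ rather than your $\lesssim$.
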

\begin{proof}
	For each $y\in\Z^{d}\setminus X$ choose $\alpha_{y}\in X$ such that $y\in \cl{B}(\alpha_{y},\lambda)$. Set $\alpha_{x}=x$ for every $x\in X$.
	For any $t\geq\sqrt{d}$, comparing a euclidean ball $\cl{B}(0,t)$ with cubes $[-t, t]^d$ and $\sqbr{-\frac{t}{\sqrt{d}},\frac{t}{\sqrt{d}}}^d$ we get the following bounds:
	\begin{equation}\label{eq:ball_lattice}
	2^d\br{\frac{t}{\sqrt{d}}-1}^d\leq\abs{\cl{B}(0,t)\cap \Z^{d}}\leq 2^d(t+1)^d
	\end{equation}
	In particular, for each $x\in X$ we have
	\begin{equation*}
	\abs{\set{y\in\Z^{d}\colon \alpha_{y}=x}}\leq \abs{\cl{B}(0,\lambda)\cap \Z^{d}}\leq 2^d(\lambda+1)^{d}.
	\end{equation*}
	Next, observe that the balls $(B(f(x),\frac{1}{2L}))_{x\in X}$ are pairwise disjoint. Setting $N:=12\lambda\sqrt{d}L$ and using \eqref{eq:ball_lattice} we infer that for every $w\in\R^d$
	\[
	\abs{\br{\cl{B}\br{w,\frac{1}{4L}}}\cap \br{w+\frac{1}{N}\Z^{d}}}=\abs{\cl{B}\br{0,\frac{N}{4L}}\cap \Z^{d}}\geq 2^d(\lambda+1)^d.
	\]
	As $x\in\set{y\in \Z^{d}\colon \alpha_{y}=x}$ for all $x\in X$, it follows that there is an injective mapping $F\colon \Z^{d}\to \R^d$ such that 
	\begin{equation*}\label{eq:iota_lattice}
	F(y)\in \br{\cl{B}\br{f(\alpha_{y}),\frac{1}{4L}}}\cap \br{f(\alpha_{y})+\frac{1}{N}\Z^{d}}\qquad\text{ for every $y\in \Z^{d}$}
	\end{equation*}
	and $F(x)=f(x)$ for all $x\in X$.
	These conditions imply $\enorm{F(p)-F(\alpha_p)}\leq 1/(4L)$ for every $p\in\Z^d$.
	
	Let $p,q\in \Z^{d}$ with $p\neq q$. We verify the bilipschitz bounds on $\enorm{F(q)-F(p)}$. The definition of $(\alpha_{y})_{y\in\Z^{d}}$ implies the following inequality, which we will use several times:
	\begin{equation*}
	\abs{\enorm{\alpha_{q}-\alpha_{p}}-\enorm{q-p}}\leq 2\lambda.
	\end{equation*}
	Since $\enorm{q-p}\geq 1$ we have
	\begin{equation*}
	\enorm{F(q)-F(p)}\leq \enorm{F(\alpha_{q})-F(\alpha_{p})}+\frac{1}{2L}\leq \br{L+2L\lambda+\frac{1}{2L}}\enorm{q-p}.
	\end{equation*}
	The lower bilipschitz bound is easy in the case that $\alpha_{p}\neq \alpha_{q}$: then it holds that $\enorm{F(q)-F(p)}\geq \enorm{F(\alpha_{q})-F(\alpha_{p})}-\frac{1}{2L}\geq \frac{1}{2L}$. So assume now that $\alpha_{p}=\alpha_{q}$. Then $\enorm{q-p}\leq 2\lambda$ and $\enorm{F(q)-F(p)}\geq 1/N$, so $\enorm{F(q)-F(p)}\geq \frac{1}{2\lambda N}\enorm{q-p}$. This completes the verification of the bilipschitz property for $F$ and substituting in the set value $N=12\lambda \sqrt{d}L$ to the estimates above delivers the desired bounds on $\lip(F)$ and $\lip(F^{-1})$.
\end{proof}

\begin{proof}[Proof of Theorem~\ref{thm:Zd_equiv}]
	For the equivalence, it suffices to prove the implication \eqref{Zd}$\Rightarrow$\eqref{gen_sep_net}, as the other one is trivial. Assume that \eqref{Zd} holds, let $A\subset\R^d$ be an $r$-separated $R$-net and let $f\colon A\to \R^{d}$ be bilipschitz.
	We set $K:=16\max\set{\frac{3d}{r},1}$ and apply Lemma~\ref{lemma:rounding_to_integers} to $A$ to get a $K$-bilipschitz mapping $\Phi\colon\R^d\to\R^d$ such that $\Phi(A)\subset\Z^d$.
	Next, we apply Lemma~\ref{lemma:ext_to_lattice} to $\rest{f\circ \Phi^{-1}}{\Phi(A)}$ and obtain its bilipschitz extension $g\colon \Z^d\to\R^d$ with $\bilip(g)\leq 24\sqrt{d} R^2K^3\bilip(f)$.
	Finally, \eqref{Zd} provides us with a bilipschitz extension $G\colon \R^{d}\to \R^{d}$ of $g$. The desired extension of $f$ is then given by $F:=G\circ \Phi$.
	If \eqref{Zd} holds with $\bilip(F)\leq C_d(\bilip(f))$ for some monotone function $C_d\colon [1,\infty)\to [1,\infty)$ then the bilipschitz extension $F$ of $f\colon A\to \R^{d}$ constructed above to verify \eqref{gen_sep_net} has bilipschitz constant bounded above by $\bilip(\Phi)\bilip(G)\leq K\cdot C_d\br{24\sqrt{d}R^{2}K^{3}\bilip(f)}$.
\end{proof}

\section{Threading a bilipschitz cord through the channel.}\label{s:threading}
In the present section we extend a given bilipschitz mapping of two `horizontal' hyperplanes in $\R^{d}$ and all integer lattice points between them, under the additional conditions that the mapping doesn't break obvious topological requirements for extendability and coincides with the identity on the two hyperplanes. The section title comes from the early stages of the development of this work, where we took the dimension $d=2$ and the two horizontal hyperplanes, or lines, to have heights $\pm\frac{1}{2}$. In this special case there is only `one layer', $\Z\times\set{0}$, of integer lattice points between the two lines and we imagined our task of extending $f$ to be like threading a bilipschitz cord through the points $f(x,0)$ for $x\in\Z$ in the correct order to determine the extension on $\R\times\set{0}$.

\restatethread

The reader may wish to consult the sketch proof of Theorem~\ref{thm:thread} in the introduction before proceeding.

\begin{lemma}\label{lemma:inj_rounding}
	Let $d,H\in\N$, $d\geq2$, $0<s\leq\frac{1}{4}$, $N\geq(2\sqrt{d}/s)^3H^{\frac{1}{d-1}}$,
	\begin{equation*}
	X_{1},X_{2},\ldots,X_{H}\subseteq \R^{d-1}\times \sqbr{\frac{1}{2}+s,H+\frac{1}{2}-s} 
	\end{equation*}
	be pairwise disjoint and suppose that $X:=\displaystyle\bigcup_{m=1}^{H}X_{m}$ is $s$-separated. 
	Then there exists a $\displaystyle 2^{8}N^{2}H^{2}$-bilipschitz mapping $\Psi\colon \R^{d}\to\R^{d}$ such that
	\begin{enumerate}[(i)]
		\item\label{Xi_i} $\Psi(x)=x$ for all $x\in\R^{d-1}\times\br{\R\setminus \br{\frac{1}{2},H+\frac{1}{2}}}$.
		\item\label{Xi_ii} $\Psi(X_{m})\subseteq \br{\frac{1}{N}\Z^{d-1}\setminus \Z^{d-1}}\times\set{m}$ for $m=1,2,\ldots,H$.
		\item\label{Xi_iii} $\enorm{\proj_{\R^{d-1}}\circ \Psi(x)-\proj_{\R^{d-1}}(x)}\leq s^{2}$ for all $x\in X$.
	\end{enumerate}
\end{lemma}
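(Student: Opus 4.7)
The plan is to build $\Psi$ as a composition $\Psi_B\circ\Psi_A$, where $\Psi_A$ performs a \emph{horizontal rounding} that sends each $x\in X$ to a nearby point of $(\frac{1}{N}\Z^{d-1}\setminus\Z^{d-1})\times\set{h_x}$ at the same height $h_x:=\proj_d(x)$, and $\Psi_B$ then slides each rounded point vertically to the correct target height $m_x$ (the unique $m\in[H]$ with $x\in X_m$). Both mappings will be produced by Theorem~\ref{thm:simultaneous_switching}, so the task reduces to specifying, for each $x\in X$, a target pair and a tube radius, and verifying that the resulting tubes are pairwise disjoint.

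The combinatorial heart of the proof is a greedy choice, for each $x\in X$, of a point $p_x\in \cl{B}(\proj_{\R^{d-1}}(x),s^2)\cap(\frac{1}{N}\Z^{d-1}\setminus\Z^{d-1})$ with all $p_x$ pairwise distinct. I would enumerate the (necessarily countable) set $X$ and at each stage avoid (a) previously chosen $p_{x'}$ lying in $\cl{B}(\proj_{\R^{d-1}}(x),s^2)$ and (b) points of $\Z^{d-1}$ inside that ball. Since $X$ is $s$-separated and $2s^2<s$, any $x'\in X$ with $\enorm{\proj_{\R^{d-1}}(x)-\proj_{\R^{d-1}}(x')}\leq 2s^2$ must satisfy $\abs{h_x-h_{x'}}\geq s\sqrt{1-4s^2}\geq s/2$, so the heights of the $x'$'s contributing to (a) form an $(s/2)$-separated subset of an interval of length $H$, bounding their number by $O(H/s)$. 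Type (b) contributes at most one point since $2s^2\leq 1/8<1$. The hypothesis $N\geq (2\sqrt{d}/s)^3 H^{1/(d-1)}$ guarantees that $\cl{B}(\proj_{\R^{d-1}}(x),s^2)$ contains at least $\br{16 H^{1/(d-1)}/s}^{d-1}=16^{d-1}H/s^{d-1}$ points of $\frac{1}{N}\Z^{d-1}$, far outstripping the forbidden count, so the greedy step never fails. Global distinctness of the $p_x$'s then follows automatically: $p_x=p_{x'}$ would force $\enorm{\proj_{\R^{d-1}}(x)-\proj_{\R^{d-1}}(x')}\leq 2s^2$, making $x'$ a previously processed member of $x$'s conflict set whose $p$-value was explicitly avoided.

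With the $p_x$'s in hand, $\Psi_A$ is produced by Theorem~\ref{thm:simultaneous_switching} applied to sources $X$ with targets $y_x:=(p_x,h_x)$ and tube radii $r_x:=\enorm{y_x-x}/2\leq s^2/2$. Each tube lies in a horizontal slab of thickness $\leq s^2$ around height $h_x$, and disjointness follows from the $s$-separation of $X$: two tubes at the same height are separated because $\enorm{\proj_{\R^{d-1}}(x)-\proj_{\R^{d-1}}(x')}\geq s>3s^2$ exceeds their combined horizontal extent, while two tubes with near-equal projections sit at heights separated by at least $s/2$, which exceeds $r_x+r_{x'}\leq s^2$. Hence $\bilip(\Psi_A)\leq 16$. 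For $\Psi_B$, apply the same theorem to sources $\set{(p_x,h_x):x\in X}$ with targets $(p_x,m_x)$ and tube radii $r_x:=\min\set{1/(2N),\abs{h_x-m_x}/2}$; the vertical tubes lie over pairwise distinct points of $\frac{1}{N}\Z^{d-1}$, so their central axes are separated horizontally by $\geq 1/N$ while each radius is $\leq 1/(2N)$, giving disjointness. A worst-case segment of length at most $H$ with radius $1/(2N)$ yields $\bilip(\Psi_B)\leq 16H^2N^2$, whence $\bilip(\Psi)\leq 2^8 N^2 H^2$. Properties~\eqref{Xi_i}--\eqref{Xi_iii} are then immediate from the construction: both $\Psi_A$ and $\Psi_B$ act as the identity outside $\R^{d-1}\times(\frac{1}{2},H+\frac{1}{2})$ (all tubes lie well inside this slab); each $x\in X_m$ is sent to $(p_x,m)\in(\frac{1}{N}\Z^{d-1}\setminus\Z^{d-1})\times\set{m}$; and $\enorm{\proj_{\R^{d-1}}\Psi(x)-\proj_{\R^{d-1}}(x)}=\enorm{p_x-\proj_{\R^{d-1}}(x)}\leq s^2$. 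The only substantive obstacle is the combinatorial counting in the greedy step; once globally distinct $p_x$'s are secured, the rest is a routine application of the swapping machinery.
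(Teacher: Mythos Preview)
Your proposal is correct and follows the same two-stage architecture as the paper's proof: a horizontal rounding $\Psi_A$ (the paper's $\Xi$) followed by a vertical correction $\Psi_B$ (the paper's $\Omega$), both realised via Theorem~\ref{thm:simultaneous_switching}, with identical tube-disjointness verifications and the same bilipschitz arithmetic.

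The one point of genuine divergence is how you manufacture the injective assignment $x\mapsto p_x\in\frac{1}{N}\Z^{d-1}\setminus\Z^{d-1}$. The paper tiles $\R^{d-1}$ by half-open cubes $Q$ of diameter $s^2$, bounds $\abs{\Phi^{-1}(\{z\})}$ for each tile $z$ by a volume-packing argument, and then checks that each tile contains at least that many points of $\frac{1}{N}\Z^{d-1}\setminus\Z^{d-1}$; injectivity across tiles is automatic since tiles are disjoint. Your greedy enumeration instead bounds, for each $x$, the number of conflicting previously-assigned $p_{x'}$ by observing that such $x'$ have $(s/2)$-separated heights in an interval of length $H$. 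Both counts are of order $H/s$ and both are dominated by the $\gtrsim (Ns^2)^{d-1}$ available lattice points, so the conclusion is the same. The tiling approach avoids any enumeration and gives distinctness for free; your greedy approach avoids the awkward half-open tile bookkeeping. Neither buys anything quantitatively.

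One minor technicality: your choice $r_x=\enorm{y_x-x}/2$ (respectively $r_x=\abs{h_x-m_x}/2$) gives $r_x=0$ when the source and target coincide, which Theorem~\ref{thm:simultaneous_switching} does not formally allow. The paper handles this by setting $r_x$ to a small positive default (e.g.\ $s^2/2$ or $1/(2N)$) in the degenerate case; you should do the same.
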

\begin{proof}
	We will construct $\Psi$ as a composition $\Omega\circ \Xi$, where $\Xi$ will take care of the first $(d-1)$-coordinates and $\Omega$ of the last coordinate. 
	\paragraph{Construction of $\Xi$.}
	We will define a $16$-bilipschitz mapping $\Xi\colon\R^{d}\to\R^{d}$ with the following properties
	\begin{enumerate}[(A)]
		\item\label{Psi_i} $\Xi(x)=x$ for all $x\in\R^{d-1}\times \br{\R\setminus \br{\frac{1}{2},H+\frac{1}{2}}}$,
		\item\label{Psi_ii} $\enorm{\Xi(x)-x}\leq s^{2}$ for all $x\in X$. 
		\item\label{Psi_iii} $\proj_{\R^{d-1}}\circ \Xi(X)\subseteq \frac{1}{N}\Z^{d-1}\setminus \Z^{d-1}$ and $\proj_{\R^{d-1}}\circ \Xi|_{X}$ is injective.
		\item\label{Psi_iv} $\proj_{d}\circ \Xi(x)=\proj_{d}(x)$ for all $x\in X$.
	\end{enumerate}
	Let $Q:=\left[0,\frac{s^{2}}{\sqrt{d-1}}\right)^{d-1}$ and for each $x\in X$ let $\Phi(x)\in\Z^{d-1}$ be defined by the condition
	\begin{equation*}
	\proj_{\R^{d-1}}(x)\in Q+\frac{s^{2}}{\sqrt{d-1}}\Phi(x).
	\end{equation*}
	For each $z\in\Z^{d-1}$ we have that $\bigcup_{x\in\Phi^{-1}(\set{z})}B(x,s/2)$ is a pairwise disjoint union of balls all contained in the set $\br{\cl{B}_{\R^{d-1}}(Q,s/2)+\frac{s^{2}z}{\sqrt{d-1}}}\times\sqbr{\frac{1}{2},H+\frac{1}{2}}$. Hence, by comparing volumes and writing $\alpha_{d}$ for the $d$-dimensional volume of the unit ball in $\R^{d}$, we get
	\begin{equation}\label{eq:Phi_preim_upper_bound}
	\abs{\Phi^{-1}(\set{z})}\leq \frac{(s+s^{2})^{d-1}H}{\alpha_{d}\br{s/2}^{d}}\leq\frac{(2s)^{d-1}2^{d}H}{\alpha_{d}s^{d}}\leq \frac{2^{d-1}\sqrt{d^{d}}H}{s}.
	\end{equation}
	On the other hand we have that 
	\begin{equation}\label{eq:tricky}
	\abs{\br{Q+\frac{s^{2}}{\sqrt{d-1}}z}\cap \br{\frac{1}{N}\Z^{d-1}\setminus\Z^{d-1}}}\geq \br{\frac{s^{2}/\sqrt{d}}{1/N}-1}^{d-1}-1\geq \frac{1}{2}\br{\frac{Ns^{2}}{2\sqrt{d}}}^{d-1} .
	\end{equation}
	The lower bound on $N$ in the hypothesis is used for the last inequality. It also ensures that the final quantity of \eqref{eq:tricky} is at least that of \eqref{eq:Phi_preim_upper_bound}. Therefore, there exists for each $z\in\Z^{d-1}$ an injective mapping  
	\begin{equation*}
	\iota_{z}\colon \Phi^{-1}(\set{z})\to \br{Q+\frac{s^{2}}{\sqrt{d-1}}z}\cap \br{\frac{1}{N}\Z^{d-1}\setminus \Z^{d-1}}.
	\end{equation*}
	For each $x\in X$ and a radius $r_{x}=r_{x}(s)$ to be determined later, we let 
	\begin{equation*}
	y_{x}:=(\iota_{\Phi(x)}(x),x_{d})\in \R^{d}, \qquad \mf{U}_{x}:=B([x,y_{x}],r_{x}).
	\end{equation*}
	Observe that for each $x\in X$ the line segment $[x,y_{x}]$ is parallel to the hyperplane $\R^{d-1}\times\set{0}$ and has length 
	\begin{equation}\label{eq:dist_x-y_x}
	\enorm{y_{x}-x}\leq \diam\br{Q}=s^{2}.
	\end{equation}
	Therefore whenever $x,x'\in X$ and $\proj_{\R^{d-1}}\mf{U}_{x}\cap \proj_{\R^{d-1}}{\mf{U}_{x'}}\neq \emptyset$ we have 
	\begin{equation*}
	\enorm{\proj_{\R^{d-1}}(x')-\proj_{\R^{d-1}}(x)}\leq 2s^{2}+ r_{x}+r_{x'}\leq 3s^{2},
	\end{equation*}
	where the last inequality comes from imposing the condition $r_{x}(s)\leq s^{2}/2$ for every $x\in X$. Therefore, whenever $x$ and $x'$ are distinct points of the $s$-separated set $X$ and $\proj_{\R^{d-1}}\mf{U}_{x}\cap \proj_{\R^{d-1}}{\mf{U}_{x'}}\neq \emptyset$ we have
	\begin{equation*}
	\abs{\proj_{d}(x')-\proj_{d}(x)}\geq s-3s^{2}\geq s^{2}\geq 2\sup_{w\in X}r_{w}(s).
	\end{equation*}
	We deduce that the collection $\br{\mf{U}_{x}}_{x\in X}$ is pairwise disjoint. Moreover, the condition $X\subseteq \R^{d-1}\times \sqbr{\frac{1}{2}+s,H+\frac{1}{2}-s}$ and the fact that each $[x,y_{x}]$ is parallel to $\R^{d-1}\times \set{0}$ ensure that each set $\mf{U}_{x}$ in this collection is contained in $\R^{d-1}\times \br{\frac{1}{2},H+\frac{1}{2}}$.

	Let $\Xi\colon \R^{d}\to\R^{d}$ be the $16$-bilipschitz mapping given by the application of Theorem~\ref{thm:simultaneous_switching} to $X$, $(y_{x})_{x\in X}$ and $(r_{x})_{x\in X}$.
	Here we need one last condition on $r_{x}(s)$, namely $r_{x}(s)\leq \enorm{y_{x}-x}/2$ for all $x\in X$ with $y_{x}\neq x$; the choice
	\begin{equation*}
	r_{x}(s):=\begin{cases}
	\frac{1}{2}\enorm{y_{x}-x} & \text{if $y_{x}\neq x$,}\\
	\frac{1}{2}s^{2} & \text{otherwise}
	\end{cases}
	\end{equation*}
	satisfies both conditions we required. The properties \eqref{Psi_i}--\eqref{Psi_iv} of $\Xi$ are now easily verified.
	
	\paragraph{Construction of $\Omega$.} 
	We construct a $16N^{2}H^{2}$-bilipschitz mapping $\Omega$ with the following properties:
	\begin{enumerate}[(I)]
		\item\label{rho_i} $\Omega(x)=x$ for all $x\in\R^{d-1}\times \br{\R\setminus \br{\frac{1}{2},H+\frac{1}{2}}}$.
		\item\label{rho_ii} $\proj_{\R^{d-1}}\circ \Omega\circ \Xi(x)=\proj_{\R^{d-1}}\circ \Xi(x)$ for all $x\in X$.
		\item\label{rho_iii} $\proj_{d}\circ \Omega\circ \Xi(x)=m$ for all $x\in X_{m}$ and $m=1,2,\ldots,H$.
	\end{enumerate}
	For each $m\in [H]$ and $x\in X_{m}$ let 
	\begin{align*}
	v_{x}&:=\Xi(x), & y_{x}:=(\proj_{\R^{d-1}}\circ \Xi(x),m)\in \R^{d},\\
	r_{x}&:=\begin{cases}
	\min\set{\frac{1}{2N},\frac{\enorm{y_{x}-v_{x}}}{2}} & \text{if }v_x\neq y_x,\\
	\frac{1}{2N} & \text{if }v_x=y_x,
	\end{cases} & \mf{U}_{x}:=B([v_{x},y_{x}], r_{x}).
	\end{align*}
	The choice of these objects, together with property \eqref{Psi_iii} of $\Xi$, ensures that the collection $(\mf{U}_{x})_{x\in X}$ is pairwise disjoint. Moreover, the hypotheses on $X$, the bounds $r_{x}<1/N<s$ and \eqref{Psi_iv} ensure that each set $\mf{U}_{x}$ with $x\in X$ is contained in $\R^{d-1}\times \br{\frac{1}{2},H+\frac{1}{2}}$.

	Let the $16N^{2}H^{2}$-bilipschitz mapping $\Omega\colon \R^{d}\to \R^{d}$ be given by Theorem~\ref{thm:simultaneous_switching} applied to $\set{v_{x}\colon x\in X}$ in place of $X$, $v_{x}$ in the role of $x$, $(y_{x})_{x\in X}$ and $(r_{x})_{x\in X}$. The properties \eqref{rho_i}--\eqref{rho_iii} of $\Omega$ are clear from the construction. 
	\paragraph{Properties of $\Psi:=\Omega\circ \Xi$.}
	Setting $\Psi:=\Omega\circ \Xi$ we get that $\Psi$ is $2^{8}N^{2}H^{2}$-bilipschitz. The properties \eqref{Xi_i}--\eqref{Xi_iii} of $\Psi$ follow easily from \eqref{Psi_i}--\eqref{Psi_iv} for $\Xi$ and \eqref{rho_i}--\eqref{rho_iii} for $\Omega$.
\end{proof}

\begin{lemma}\label{lemma:_specific_applied_permute}
	Let $d\geq 2$, $T,N\in \N$, $m\in\R$ and $\sigma\colon \frac{1}{N}\Z^{d-1}\to \frac{1}{N}\Z^{d-1}$ be a permutation with $\enorm{\sigma(x)-x}\leq T$ for all $x\in \frac{1}{N}\Z^{d-1}$. Then there exists a
	\begin{align*}
	2^{6d+10}N^{2d+1}T^{2d}&\text{-bilipschitz mapping}\\
	\Upsilon\colon\R^{d-1}\times\sqbr{m-\frac{1}{2},m+\frac{1}{2}}&\to \R^{d-1}\times \sqbr{m-\frac{1}{2},m+\frac{1}{2}}
	\end{align*}
	such that 
	\begin{enumerate}[(i)]
		\item $\Upsilon(x)=x$ for all $x\in\R^{d-1}\times \set{m-\frac{1}{2},m+\frac{1}{2}}$.
		\item $\Upsilon(x,m)=(\sigma(x),m)$ for all $x\in \frac{1}{N}\Z^{d-1}$.
	\end{enumerate}
\end{lemma}
\begin{proof}
	Suppose that the lemma is valid whenever $N=1$ and $m=0$. Then we may consider the permutation $\wt{\sigma}\colon \Z^{d-1}\to\Z^{d-1}$ defined by $\wt{\sigma}(x)=N\sigma\br{\frac{x}{N}}$ and note that $\enorm{\wt{\sigma}(x)-x}\leq NT$ for all $x\in \Z^{d-1}$. Let $\wt{\Upsilon}\colon \R^{d-1}\times\sqbr{-\frac{1}{2},\frac{1}{2}}\to \R^{d-1}\times\sqbr{-\frac{1}{2},\frac{1}{2}}$ be the $2^{6d+10}\br{NT}^{2d}$-bilipschitz mapping given by the version of the theorem with $N=1$ and $m=0$. The full statement is now verified by the mapping $\Upsilon\colon \R^{d-1}\times\sqbr{m-\frac{1}{2},m+\frac{1}{2}}\to\R^{d-1}\times\sqbr{m-\frac{1}{2},m+\frac{1}{2}}$ defined by 
	\begin{equation*}
	\Upsilon(x)=\rho^{-1}\circ\wt{\Upsilon}\circ \rho(x)\qquad\text{for all $x\in \R^{d-1}\times\sqbr{m-\frac{1}{2},m+\frac{1}{2}}$,}
	\end{equation*}
	where $\rho\colon \R^{d-1}\times \sqbr{m-\frac{1}{2},m+\frac{1}{2}}\to\R^{d-1}\times\sqbr{-\frac{1}{2},\frac{1}{2}}$ is the affine bijection given by $\rho(x,h)=(Nx,h-m)$ for all $x\in\R^{d-1}$ and $h\in\sqbr{m-\frac{1}{2},m+\frac{1}{2}}$.
	
	It remains to prove the lemma in the case $N=1$ and $m=0$. In the proof that follows we will work with Euclidean balls around sets, both in $\R^{d-1}$ and in $\R^{d}$. Therefore, we will extend our usual ball notation to $B_{\R^{d-1}}$ and $B_{\R^{d}}$, to distinguish between them. Recall that these notations always refer to open balls.
	
	Define an infinite graph $G$ on the vertex set $\Z^{d-1}$ by prescribing that two points $x,z\in\Z^{d-1}$ are connected by an edge if and only if $x\neq z$ and $B_{\R^{d-1}}([x,\sigma(x)],1)\cap B_{\R^{d-1}}([z,\sigma(z)],1)\neq \emptyset$. Note that the degree of any vertex of $G$ is uniformly bounded. In fact, whenever $x,z\in\Z^{d-1}$ are connected by an edge in $G$ we have that
	\begin{equation*}
	\enorm{z-x}< \enorm{z-\sigma(z)}+\enorm{\sigma(x)-x}+2\leq 2T+2\leq 4T.
	\end{equation*}
	Hence, any vertex $x\in\Z^{d-1}$ can have at most $(8T)^{d-1}-1$ adjacent vertices $z\in\Z^{d-1}$: To verify this, note that any adjacent vertex $z$ of $x$ satisfies 
	\begin{equation*}
		z-x\in\set{-(4T-1),-(4T-2),\ldots,0,1,\ldots,4T-1}^{d-1}\setminus \set{0}^{d-1}.
	\end{equation*}
	Using the standard graph theoretic notation $\Delta$ for maximal degree of any vertex, we have thus verified $\Delta(G)\leq 2^{3(d-1)}T^{d-1}-1$. Let $\iota\colon \Z^{d-1}\to\sqbr{2^{3(d-1)}T^{d-1}}$ be a proper colouring of the vertices of $G$, that is, a mapping of the set of vertices $\Z^{d-1}$ which assigns different values, or `colours', to any pair of adjacent vertices. Here we are making use of the trivial fact that any countable graph $\Gamma$ with $\Delta(\Gamma)<\infty$ admits a proper colouring with $\Delta(\Gamma)+1$ colours. Such a proper colouring can be obtained via a greedy algorithm~\cite[Proposition 5.2.2]{DiestelGT}.
	
	For each $x\in \Z^{d-1}$ let $y_{x}\upp{i}\in\R^{d}$, $r_{x}\upp{i}>0$ and $\mf{U}_{x}\upp{i}\subseteq \R^{d}$ for $i\in[3]\cup\set{0}$ or $i\in[3]$ be defined as 
	\begin{align*}
	y_{x}\upp{0}&:=(x,0),\,\, y_{x}\upp{1}:=\br{x,\frac{\iota(x)}{2^{3d-1}T^{d-1}}},\,\, y_{x}\upp{2}:=\br{\sigma(x),\frac{\iota(x)}{2^{3d-1}T^{d-1}}},\,\, y_{x}\upp{3}:=\br{\sigma(x),0},\\
	r_{x}\upp{i}&:=\frac{\enorm{y_{x}\upp{i}-y_{x}\upp{i-1}}}{2}\quad\text{for $i=1,3$,}\qquad \text{and}\qquad r_{x}\upp{2}:=\frac{1}{2^{3d}T^{d-1}},\\
	\mf{U}_{x}\upp{i}&:=B_{\R^{d}}\br{\sqbr{y_{x}\upp{i-1},y_{x}\upp{i}},r_{x}\upp{i}} \qquad \text{for each $i\in[3]$.}
	\end{align*}
	Note that $\mf{U}_{x}\upp{i}\subseteq \R^{d-1}\times\br{-\frac{1}{2},\frac{1}{2}}$ for each $x\in\Z^{d-1}$ and $i\in[3]$.
	
	Our aim, for each $i\in [3]$, is to apply Theorem~\ref{thm:simultaneous_switching} to the collection of sets $(\mf{U}_{x}\upp{i})_{x\in\Z^{d-1}}$. Therefore, the next part of the proof will be focussed on establishing the conditions of Theorem~\ref{thm:simultaneous_switching}. 
	
	We claim, for each $i\in[3]$, that the collection of sets $(\mf{U}_{x}\upp{i})_{x\in \Z^{d-1}}$ is pairwise disjoint. This is obviously true for $i=1,3$, since the segments $\sqbr{y_{x}\upp{i-1},y_{x}\upp{i}}$ are are all parallel to $e_{d}$ and the set $\set{y_{x}\upp{i-1}\colon x\in\Z^{d-1}}$ is $1$-separated. For $i=2$, our claim relies on the proper colouring $\iota$ of the graph $G$. For distinct points $x,z\in\Z^{d-1}$ we verify that $\mf{U}_{x}\upp{2}\cap \mf{U}_{z}\upp{2}=\emptyset$ by distinguishing two cases. If $B_{\R^{d-1}}([x,\sigma(x)],1)\cap B_{\R^{d-1}}([z,\sigma(z)],1)=\emptyset$ then $B_{\R^{d}}\br{\sqbr{y_{x}\upp{1},y_{x}\upp{2}},1}\cap B_{\R^{d}}\br{\sqbr{y_{z}\upp{1},y_{z}\upp{2}},1}=\emptyset$ and the claim follows. In the remaining case we have that $\iota(x)\neq \iota(z)$. Hence the line segments $\sqbr{y_{x}\upp{1},y_{x}\upp{2}}$ and $\sqbr{y_{z}\upp{1},y_{z}\upp{2}}$ are both orthogonal to $e_{d}$ and their last coordinate projections differ by at least $\frac{1}{2^{3d-1}T^{d-1}}$. It follows that $\mf{U}_{x}\upp{2}\cap \mf{U}_{z}\upp{2}=\emptyset$.  
	
	To establish the remaining conditions of Theorem~\ref{thm:simultaneous_switching}, we observe the following:
	\begin{align*}
	r_{x}\upp{i}&\leq \frac{1}{2}\enorm{y_{x}\upp{i}-y_{x}\upp{i-1}}\quad\text{if $i\in [3]$, $x\in\Z^{d-1}$ and $y_{x}\upp{i}\neq y_{x}\upp{i-1}$,}\\
	\frac{\enorm{y_{x}\upp{i}-y_{x}\upp{i-1}}}{r_{x}\upp{i}}&=2 \qquad \text{for $i=1,3$ and $x\in \Z^{d-1}$, and}\\
	\frac{\enorm{y_{x}\upp{2}-y_{x}\upp{1}}}{r_{x}\upp{2}}&\leq\frac{T}{\frac{1}{2^{3d}T^{d-1}}}=2^{3d}T^{d}\quad\text{if $x\in\Z^{d-1}$ and $y_{x}\upp{2}\neq y_{x}\upp{1}$.}
	\end{align*}
	For each $i\in [3]$ let $\Gamma_{i}\colon\R^{d}\to\R^{d}$ be the bilipschitz mapping given by Theorem~\ref{thm:simultaneous_switching} applied with $X=\set{y_{x}\upp{i-1}\colon x\in\Z^{d-1}}$, $y_{x}\upp{i-1}$ in place of $x$, $y_{x}\upp{i}$ in place of $y_{x}$ and $r_{x}\upp{i}$ in place of $r_{x}$. Observe the following properties of $\Gamma_{i}$ for $i\in[3]$:
	\begin{enumerate}[(i)]
		\item\label{Gammai1} $\Gamma_{i}\br{y_{x}\upp{i-1}}=y_{x}\upp{i}$ for each $x\in \Z^{d-1}$.
		\item\label{Gammai3} $\Gamma_{i}(x)=x$ for all $x\in \R^{d-1}\times\br{\R\setminus\br{-\frac{1}{2},\frac{1}{2}}}$.
		\item\label{Gammai4} $\bilip(\Gamma_{1})\leq 16$, $\bilip(\Gamma_{2})\leq 2^{6d+2}T^{2d}$, and $\bilip(\Gamma_{3})\leq 16$.
	\end{enumerate}
	The desired mapping may now be defined by $\Upsilon:=\Gamma_{3}\circ\Gamma_{2}\circ\Gamma_{1}$.
\end{proof}

We are now ready to prove Theorem~\ref{thm:thread}:
\begin{proof}[Proof of Theorem~\ref{thm:thread}]
	Let the
	\begin{equation*}
	2^{8}N^{2}H^{2}\text{-bilipschitz mapping } \Psi\colon \R^{d}\to\R^{d}
	\end{equation*}
	be given by the conclusion of Lemma~\ref{lemma:inj_rounding} applied to 
	\begin{equation}\label{eq:setup}
	s:=\frac{1}{4L},\quad X_{m}:=f\br{\Z^{d-1}\times\set{m}},\quad  X:=f(\Z^{d-1}\times[H]), \quad N:=\floor{2(2\sqrt{d}(4L))^{3}H^{\frac{1}{d-1}}}\in\N.
	\end{equation}
	We apply the obvious upper bound for $N$ to record that
	\begin{equation}\label{eq:bPsi}
	\bilip(\Psi)\leq 2^{8}N^{2}H^{2}\leq 2^{28}d^{3}L^{6}H^{2+\frac{2}{d-1}}\leq 2^{14d}d^{3d/2}L^{3d}H^{2d}.
	\end{equation}
	Let $m\in [H]$. Then we define 
	\begin{equation*}
	\xi_{m}\colon \Z^{d-1}\to \frac{1}{N}\Z^{d-1}\setminus \Z^{d-1}
	\end{equation*}
	by
	\begin{equation*}
	\xi_{m}(x)=\proj_{\R^{d-1}}\circ \Psi\circ f(x,m),\qquad x\in \Z^{d-1}.
	\end{equation*}
	Using properties \eqref{Xi_i} and \eqref{Xi_ii} of $\Psi$ from Lemma~\ref{lemma:inj_rounding}, we have
	\begin{multline}\label{eq:xi-id}
	\enorm{\xi_{m}(x)-x}= \enorm{\Psi\circ f(x,m)-(x,m)}\\
	\leq \enorm{\Psi\circ f(x,m)-\Psi\circ f\biggl(x,\frac{1}{2}\biggr)}+\enorm{\biggl(x,\frac{1}{2}\biggr)-(x,m)}\\
	\leq 2^{8}LN^{2}H^{3}+H\leq \floor{2^{9}LN^{2}H^{3}}=:T, \qquad \text{for all $x\in \Z^{d-1}$.}
	\end{multline}
	Since $N\in\N$ we have that $\Z^{d-1}\subseteq \frac{1}{N}\Z^{d-1}$, so, in particular, $\xi_{m}$ is defined on a subset of $\frac{1}{N}\Z^{d-1}$. We now extend $\xi_{m}$ to a permutation $\sigma_{m}\colon \frac{1}{N}\Z^{d-1}\to \frac{1}{N}\Z^{d-1}$ as follows: Since each $\xi_{m}$ is injective we may consider its inverse $\xi_{m}^{-1}\colon \xi_{m}(\Z^{d-1})\to \Z^{d-1}$. Remembering that $\Z^{d-1}\cap \xi_{m}(\Z^{d-1})=\emptyset$, we may define $\sigma_{m}\colon \frac{1}{N}\Z^{d-1}\to \frac{1}{N}\Z^{d-1}$ by
	\begin{equation*}
	\sigma_{m}(x)=\begin{cases}
	\xi_{m}(x) & \text{if }x\in \Z^{d-1},\\
	\xi^{-1}_{m}(x) & \text{if }x\in \xi_{m}(\Z^{d-1}),\\
	x & \text{if }x\in \frac{1}{N}\Z^{d-1}\setminus \br{\Z^{d-1}\cup \xi_{m}\br{\Z^{d-1}}}.
	\end{cases}
	\end{equation*}
	It is clear that $\sigma_{m}\colon \frac{1}{N}\Z^{d-1}\to \frac{1}{N}\Z^{d-1}$ is a permutation extending $\xi_{m}$, $\sigma_{m}\circ \sigma_{m}=\id$ and that $\enorm{\sigma_{m}(x)-x}\leq T$, defined by \eqref{eq:xi-id}, for all $x\in\frac{1}{N}\Z^{d-1}$.  
	Let $\Upsilon \colon \R^{d}\to \R^{d}$ be defined by
	\begin{equation*}
	\Upsilon(x)=\begin{cases}
	\Upsilon_{m}(x) & \text{if }x\in \R^{d-1}\times\sqbr{m-\frac{1}{2},m+\frac{1}{2}},\, m\in [H],\\
	x & \text{if }x\in\R^{d-1}\times\br{\R\setminus \br{\frac{1}{2},H+\frac{1}{2}}},
	\end{cases}
	\end{equation*}
	where for each $m\in[H]$ the  $2^{6d+10}N^{2d+1}T^{2d}$- bilipschitz mapping
	\begin{equation*}
	\Upsilon_{m}\colon \R^{d-1}\times\sqbr{m-\frac{1}{2},m+\frac{1}{2}}\to\R^{d-1}\times \sqbr{m-\frac{1}{2},m+\frac{1}{2}}
	\end{equation*}
	is given by Lemma~\ref{lemma:_specific_applied_permute} for the permutation $\sigma_{m}\colon \frac{1}{N}\Z^{d-1}\to \frac{1}{N}\Z^{d-1}$. 
	By Lemma~\ref{lemma:simple_gluing} applied to $\Upsilon$ and the collection of open sets $\br{\R^{d-1}\times\br{m-\frac{1}{2},m+\frac{1}{2}}}_{m\in [H]}$ together with $\R^{d-1}\times\br{-\infty,\frac{1}{2}}$ and $\R^{d-1}\times \br{H+\frac{1}{2},\infty}$ we verify that $\Upsilon$ is bilipschitz with
	\begin{align}
	\bilip\br{\Upsilon}&\leq 2^{11d}N^{7d}\br{T/N^{2}}^{2d}\nonumber \\
	&\leq 2^{11d}\br{2^{10}d^{3/2}L^{3}H^{\frac{1}{d-1}}}^{7d}\br{2^{9}LH^{3}}^{2d}\nonumber \\
	&\leq 2^{99d}d^{21d/2}L^{23d}H^{13d}. \label{eq:bUps}
	\end{align}
	Further, for all $x\in \Z^{d-1}$ and $m\in [H]$, we have
	\begin{equation}\label{eq:transform_to_id}
	\Upsilon\circ \Psi \circ f(x,m)=\Upsilon(\xi_{m}(x),m)=(\sigma_{m}\circ \xi_{m}(x),m)=(x,m).
	\end{equation}
	The desired extension $F\colon\R^{d}\to\R^{d}$ may now be defined as
	\begin{equation*}
	F:=\Psi^{-1}\circ \Upsilon^{-1}.
	\end{equation*}
	Applying this mapping to both sides of \eqref{eq:transform_to_id} we verify that $F(x,m)=f(x,m)$ for all $x\in\Z^{d-1}$ and $m\in [H]$. Moreover, it is clear that $F$ coincides with the identity on $\R^{d-1}\times \set{\frac{1}{2},H+\frac{1}{2}}$. 
	Thus, $F$ is an extension of $f$. Finally, we put together \eqref{eq:bPsi} and \eqref{eq:bUps} to get
	\begin{equation*}
	\bilip(F)\leq\bilip(\Psi)\cdot\bilip(\Upsilon)
	\leq d^{125d}L^{26d}H^{15d}.
	\end{equation*}
\end{proof}

\section{Two conjectures.}\label{sec:conjectures}
In this last section, we prove Theorem~\ref{thm:conjectures_imply_full_solution}, which establishes a relationship between the open problem of bilipschitz extension from separated nets in general dimension $d$ and two conjectures: Conjectures~\ref{conj1} and \ref{conj2}. We emphasise that all the statements, Lemmas~\ref{lemma:gluing}--\ref{lemma:well_separating}, of the present section are independent of any conjectures and are potentially useful in their own right for the future study of the bilipschitz extension problem for $\Z^{d}$. Conjectures~\ref{conj1} and \ref{conj2} only play a role in the final proof, of Theorem~\ref{thm:conjectures_imply_full_solution}.

\begin{lemma}\label{lemma:gluing}
	Let $I$ be a set, $(A_{i})_{i\in I}$ be a pairwise disjoint collection of open subsets $A_{i}$ of $\R^{d}$, $L\geq 1$ and $G\colon \bigcup_{i\in I}\cl{A_{i}}\to \R^{d}$ be a mapping such that the collection $(G(A_{i}))_{i\in I}$ is pairwise disjoint, the restriction $G|_{\bigcup_{i\in I} \partial A_{i}}$ is $L$-bilipschitz and for each $i\in I$ the restriction $G|_{A_{i}}$ is $L$-bilipschitz. Then $G$ is $L$-bilipschitz.
\end{lemma}
\begin{proof}
	From the assumptions it follows that $G$ is injective, so its inverse $G^{-1}\colon \bigcup_{i\in I}\cl{G(A_{i})}\to \R^{d}$ is well-defined. The assumptions of the lemma are also symmetric with respect to interchanging $(G,A_{i})$ and $(G^{-1},G(A_{i}))$, due to Brouwer's Invariance of Domain~\cite[Thm. 2B.3]{Hatcher02}. Therefore, it suffices to verify that $G$ is $L$-Lipschitz.
	
	It is enouch to check the $L$-Lipschitz condition for a pair $x\in A_{i}$ and $y\in A_{j}$ with $i,j\in I$ and $i\neq j$. Note that $[x,y]\cap \partial A_{i}\neq \emptyset$ and choose $z_{1}\in [x,y]\cap \partial A_{i}$. Similarly, choose $z_{2}\in [z_{1},y]\cap \partial A_{j}$. Then, setting $z_{0}=x$ and $z_{3}=y,$ we have
	\begin{equation*}
	\enorm{G(y)-G(x)}\leq \sum_{i=1}^{3}\enorm{G(z_{i})-G(z_{i-1})}\leq L\sum_{i=1}^{3}\enorm{z_{i}-z_{i-1}}= L\enorm{y-x}.
	\end{equation*} 
\end{proof}

\begin{lemma}\label{lemma:far_apart_bilipschitz}
	Let $d\in\N$, $d\geq 2$, $1\leq L<K\leq J$, $T\in\N$, $T\geq \frac{K\sqrt{d}\br{1+JL}}{K-L}$, $f\colon\Z^{d} \to \R^{d}$ be an $L$-bilipschitz mapping and for each $i\in\Z$ let $F_{i}\colon \Z^{d}\cup\br{\R^{d-1}\times \set{Ti+\frac{1}{2}}}\to \R^{d}$ be a $J$-bilipschitz extension of $f$ such that $\bilip\br{F_{i}|_{\R^{d-1}\times\set{Ti+\frac{1}{2}}}}\leq K$. Then the mapping $F\colon \Z^{d}\cup\br{\R^{d-1}\times \br{T\Z+\frac{1}{2}}}\to\R^{d}$ defined by
	\begin{equation*}
	F(x)=F_{i}(x)\quad\text{whenever }x\in\Z^{d}\cup\br{\R^{d-1}\times\set{Ti+\frac{1}{2}}},\, i\in\Z,
	\end{equation*}
	is $J$-bilipschitz and satisfies $\bilip\br{F|_{\R^{d-1}\times\br{T\Z+\frac{1}{2}}}}\leq K$.
\end{lemma}
\begin{proof}
	We show that $F|_{\R^{d-1}\times\br{T\Z+\frac{1}{2}}}$ is $K$-bilipschitz. It is enough to verify the $K$-bilipschitz condition for a pair of points $x\in \R^{d-1}\times\set{Ti+\frac{1}{2}}$, $y\in\R^{d-1}\times\set{Tj+\frac{1}{2}}$, where $i,j\in\Z$ and $i\neq j$. We choose $x', y'\in\Z^{d}$ minimising distance to $x,y$, respectively. Since $\enorm{x-y}\geq T\abs{i-j}\geq T$, it holds that
	\begin{align*}
	\enorm{F(x)-F(y)}&\leq\enorm{f(x')-f(y')}+\enorm{F_i(x)-F_i(x')}+\enorm{F_j(y)-F_j(y')}\\
	&\leq L(\enorm{x-y}+\sqrt{d})+J\sqrt{d}\leq\br{L+\frac{(L+J)\sqrt{d}}{T}}\enorm{x-y}\leq K\enorm{x-y}.
	\end{align*}
	Similarly, we get that
	\begin{align*}
	\enorm{F(x)-F(y)}&\geq\enorm{f(x')-f(y')}-\enorm{F_i(x)-F_i(x')}-\enorm{F_j(y)-F_j(y')}\\
	&\geq\frac{\enorm{x-y}-\sqrt{d}}{L}-J\sqrt{d}\geq\br{\frac{1}{L}-\frac{\sqrt{d}}{TL}-\frac{J\sqrt{d}}{T}}\enorm{x-y}\geq \frac{\enorm{x-y}}{K}.
	\end{align*}
	Hence $F|_{\R^{d-1}\times\br{T\Z+\frac{1}{2}}}$ is $K$-bilipschitz. This, together with the definition of $F$, implies that $F$ is $J$-bilipschitz.
\end{proof}

\begin{lemma}\label{lemma:well_separating}
	Let $d\in\N$, $d\geq 2$, $F\colon \Z^{d}\cup\br{\R^{d-1}\times \br{T\Z+\frac{1}{2}}}\to\R^{d}$ be bilipschitz such that for each $i\in\Z$ the set $\R^{d}\setminus F\br{\R^{d-1}\times\set{Ti+\frac{1}{2}}}$ has two connected components and for every $x,y\in\Z^{d}$
	\begin{align}
	\text{$x$ and $y$ belong to the same} &\text{ connected component of $\R^{d}\setminus \br{\R^{d-1}\times\set{Ti+\frac{1}{2}}}$}\nonumber\\
	&\Updownarrow\label{eq:separation}\\
	\text{$F(x)$ and $F(y)$ belong to the same} &\text{ connected component of $\R^{d}\setminus F\br{\R^{d-1}\times\set{Ti+\frac{1}{2}}}$.}\nonumber
	\end{align}
	Then, for each $i\in\Z$, there is a unique connected open subset $V_{i}$ of $\R^{d}$ with boundary equal to $F\br{\R^{d-1}\times\set{T(i-1)+\frac{1}{2},Ti+\frac{1}{2}}}$. Moreover, the collection $(V_{i})_{i\in\Z}$ has the following properties:
	\begin{enumerate}[(i)]
		\item\label{Vi} $V_{i}\cap V_{j}=\emptyset$ for all $i,j\in\Z$ with $i\neq j$.
		\item\label{images_in_Vi} $\displaystyle F\br{\Z^{d}\cap \br{\R^{d-1}\times\sqbr{T(i-1)+\frac{1}{2},Ti+\frac{1}{2}}}}\subseteq V_{i}$ for each $i\in\Z$.
	\end{enumerate}
\end{lemma}
\begin{proof}
	For each $i\in\Z$ we let $\mc{L}_{i}:=\R^{d-1}\times\set{Ti+\frac{1}{2}}$. For each $i\in\Z$ the set $F(\mc{L}_{i})$ lies in one of the two connected components of $\R^{d}\setminus F(\mc{L}_{i-1})$ and vice-versa. It follows that $\R^{d}\setminus F\br{\mc{L}_{i-1}\cup\mc{L}_{i}}$ has exactly three connected components, exactly one of which has boundary equal to $F(\mc{L}_{i-1}\cup\mc{L}_{i})$. Hence, the sets $V_{i}$, for $i\in\Z$, exist and are well-defined as claimed.
	
	We first verify  \eqref{images_in_Vi}. Fix $i\in\Z$, set 
	\begin{equation*}
	C_{i}:=\Z^{d}\cap \conv\br{\mc{L}_{i-1}\cup\mc{L}_{i}},
	\end{equation*}
	where $\conv$ stands for the convex hull, and let $x\in C_{i}$ and $y\in \Z^{d}\setminus C_{i}$.
	Then, by \eqref{eq:separation}, there is $\sigma\in\set{-1,0}$ such that $F(x)$ and $F(y)$ belong to different connected components of $\R^{d}\setminus F(\mc{L}_{i+\sigma})$. Since $x\in C_{i}$ and $y\in \Z^{d}\setminus C_{i}$ were arbitrary, this proves that the connected components of $\R^{d}\setminus F(\mc{L}_{i-1}\cup \mc{L}_{i})$ determine a partition of $F(\Z^{d})$ which separates $F\br{C_{i}}$ from $F\br{\Z^{d}\setminus C_{i}}$. By \eqref{eq:separation}, the latter set has to intersect both connected components of $\R^{d}\setminus F(\mc{L}_{i+\sigma})$ for both $\sigma=-1$ and $\sigma=0$. This leaves only the connected component $V_{i}$ of $\R^{d}\setminus F(\mc{L}_{i-1}\cup \mc{L}_{i})$ with boundary equal to $F(\mc{L}_{i-1})\cup F(\mc{L}_{i})$ free to accommodate $F\br{C_{i}}$. This proves \eqref{images_in_Vi}.	
	
	To establish \eqref{Vi}, fix $i\in \Z$ and let $U_{+},U_{-}$ be the connected components of $\R^{d}\setminus F\br{\mc{L}_{i}}$ containing $F\br{\br{Ti+1}e_{2}}$ and $F\br{Tie_{2}}$ respectively. If $j>i$ then, according to \eqref{eq:separation}, the points $F\br{\br{Tj+1}e_{2}}$ and $F\br{\br{Ti+1}e_{2}}$ belong to the same connected component $U_{+}$ of $\R^{d}\setminus F\br{\mc{L}_{i}}$, but to different connected components of $\R^{d}\setminus F\br{\mc{L}_{j}}$. We conclude that $F\br{\mc{L}_{j}}\subseteq U_{+}$. Similarly, we can show that $F\br{\mc{L}_{j}}\subseteq U_{-}$ whenever $j<i$. To summarise, we have shown for an arbitrary $i\in\Z$ that all the sets $F(\mc{L}_{j})$ with $j>i$ lie inside one of the connected components of $\R^{d}\setminus F(\mc{L}_{i})$, whilst all the sets $F(\mc{L}_{j})$ with $j<i$ lie in the other. This clearly implies \eqref{Vi}.
\end{proof}
\begin{proof}[Proof of Theorem~\ref{thm:conjectures_imply_full_solution}]
	Suppose \eqref{f_has_extension_F} holds and let $L\geq 1$ and $f\colon \Z^{d}\to \R^{d}$ be an $L$-bilipschitz mapping. Then $P:=P(d,L)$ and a $P$-bilipschitz extension $F\colon \R^{d}\to\R^{d}$ of $f$ are given by \eqref{f_has_extension_F}. Now $K(d,L)=J(d,L)=P(d,L)$ and $G=F$ verify Conjecture~\ref{conj1} for $f$. Hence \eqref{conj1_holds} holds.

	Now suppose that \eqref{conj1_holds} holds and let $L\geq 1$ and $f\colon \Z^{d}\to \R^{d}$ be an $L$-bilipschitz mapping. Let $K:=K(d,L)>L$ and $J:=J(d,L)>L$ be given by Conjecture~\ref{conj1} and let $T:=\ceil{\frac{K\sqrt{d}(1+JL)}{K-L}}$. For each $i\in\Z$ let the $K$-bilipschitz mappings $\wt{G}_{i}\colon \R^{d}\to\R^{d}$ and the $J$-bilipschitz mapping $\wt{F}_{i}\colon\Z^{d}\cup\br{\R^{d-1}\times\set{Ti+\frac{1}{2}}}\to\R^{d}$ be given by Conjecture~\ref{conj1} (appropriately shifted so that $\R^{d-1}\times\set{\frac{1}{2}}$ becomes $\R^{d-1}\times\set{Ti+\frac{1}{2}}$). 
	
	Let $\wt{F}\colon \Z^{d}\cup\br{\R^{d-1}\times\br{T\Z+\frac{1}{2}}}\to\R^{d}$ be the bilipschitz mapping, with
	\begin{equation*}
	\bilip(\wt{F})\leq J,\qquad \bilip(\wt{F}|_{\R^{d-1}\times\br{T\Z+\frac{1}{2}}})\leq K,
	\end{equation*}
	provided by Lemma~\ref{lemma:far_apart_bilipschitz} with $\wt{F}_{i}$ in the role of $F_{i}$ for each $i\in\Z$. Then Lemma~\ref{lemma:well_separating} applies to $\wt{F}$; we get a pairwise disjoint collection of sets $(V_{i})_{i\in\Z}$, where for each $i\in\Z$, $V_{i}$ is the open, connected subset of $\R^{d}\setminus \br{\wt{F}\br{\R^{d-1}\times\set{T(i-1)+\frac{1}{2},Ti+\frac{1}{2}}}}$ with boundary equal to $\wt{F}\br{\R^{d-1}\times\set{T(i-1)+\frac{1}{2},Ti+\frac{1}{2}}}$. Moreover, we have\begin{equation*}
	f\br{\Z^{d}\cap\br{\R^{d-1}\times\sqbr{T(i-1)+\frac{1}{2},Ti+\frac{1}{2}}}}\subseteq V_{i}\qquad\text{for each }i\in\Z.
	\end{equation*}
	
	Observe that Conjecture~\ref{conj2} holds, with the same constant $C(d,L,M)$, independent of $T>0$, if the domain of the mapping $\psi$ is instead taken as $\R^{d-1}\times\set{0,T}$. In this case, the `$\R^{d-1}\times\set{0,1}$' version of the Corollary applies to the $M$-bilipschitz mapping $\R^{d-1}\times\set{0,1}\to\R^{d}$, $x\mapsto \frac{1}{T}\psi(Tx)$ to provide its $C(d,L,M)$-bilipschitz extension $\Psi\colon \R^{d}\to\R^{d}$. Then the mapping $\R^{d}\to\R^{d}$, $x\mapsto T\Psi(x/T)$ will be a $C(d,L,M)$-bilipschitz extension of $\psi\colon \R^{d-1}\times \set{0,T}\to\R^{d}$.  
	
	Note, for each $i\in\Z$, that the mapping $\psi_{i}:=\wt{G}_{i}^{-1}\circ \wt{F}|_{\R^{d-1}\times\set{T(i-1)+\frac{1}{2},Ti+\frac{1}{2}}}$ is $K^{2}$-bilipschitz, coincides with the identity on $\R^{d-1}\times\set{Ti+\frac{1}{2}}$ and with the $K^{2}$-bilipschitz mapping $\wt{G}_{i}^{-1}\circ \wt{G}_{i-1}$ on $\R^{d-1}\times\set{T(i-1)+\frac{1}{2}}$. Thus, for each $i\in\Z$, we may apply the `$\R^{d-1}\times\set{0,T}$' version of Conjecture~\ref{conj2} to extend $\psi_{i}$ to a $C$-bilipschitz mapping $\Psi_{i}\colon \R^{d}\to\R^{d}$, where $C:=C(d,K^{2},K^{2})$.
	
	Set $G_{i}:=\wt{G}_{i}\circ \Psi_{i}$ for each $i\in\Z$. Now we may define $G\colon \R^{d}\to\R^{d}$ as the gluing together of the mappings $G_{i}|_{\set{\R^{d-1}\times\sqbr{T(i-1)+\frac{1}{2},Ti+\frac{1}{2}}}}$, that is
	\begin{equation*}
	G(x):=G_{i}(x)\qquad \text{whenever }i\in\Z\text{ and }x\in \R^{d-1}\times\sqbr{T(i-1)+\frac{1}{2},Ti+\frac{1}{2}}.
	\end{equation*}
	Each restriction $G|_{\R^{d-1}\times\sqbr{T(i-1)+\frac{1}{2},Ti+\frac{1}{2}}}$, with $i\in\Z$, is $KC$-bilipschitz. Moreover, we have that $G|_{\R^{d-1}\times\br{T\Z+\frac{1}{2}}}=\wt{F}|_{\R^{d-1}\times \br{T\Z+\frac{1}{2}}}$. Hence, $G|_{\R^{d-1}\times\br{T\Z+\frac{1}{2}}}$ is $K$-bilipschitz and 
	\begin{equation*}
	G\br{\R^{d-1}\times\br{T(i-1)+\frac{1}{2},Ti+\frac{1}{2}}}=V_{i}\qquad \text{for each }i\in\Z.
	\end{equation*}
	Thus, the conditions of Lemma~\ref{lemma:gluing} are satisfied by the mapping $G$ and the collection of open sets $\br{\R^{d-1}\times\br{T(i-1)+\frac{1}{2},Ti+\frac{1}{2}}}$. We conclude that $G$ is $KC$-bilipschitz.
	
	The proof is now completed by Theorem~\ref{thm:bil_ext_equiv}, since $T\leq \frac{K\sqrt{d}(1+JL)}{K-L}+1$, the $KC$-bilipschitz mapping $G$ and the $J$-bilipschitz mapping $\wt{F}$ verify its condition \eqref{bil_wsep}.	
\end{proof}

\bibliographystyle{plain}
\bibliography{citations}
\end{document}